\documentclass[12pt]{amsart}
\usepackage{fullpage,url,amssymb,enumerate,colonequals}

\usepackage[utf8]{inputenc}

\usepackage{mathrsfs} 
\usepackage{MnSymbol}
\usepackage{extarrows}
\usepackage{lscape}
\usepackage[all,cmtip]{xy}

\usepackage{amscd}

\usepackage[OT2,T1]{fontenc}

\usepackage{color}

\usepackage[
        colorlinks, citecolor=darkgreen,
        backref,
        pdfauthor={Imin Chen}, 
]{hyperref}

\usepackage{comment}

\usepackage[ruled,vlined]{algorithm2e}

\usepackage{listings}

\newcommand{\F}{\mathbb{F}}

\newcommand{\Q}{\mathbb{Q}}
\newcommand{\R}{\mathbb{R}}
\newcommand{\Z}{\mathbb{Z}}

\newcommand{\rhobar}{{\overline{\rho}}}

\newcommand{\calO}{\mathcal{O}}

\newcommand{\Fp}{\mathfrak{p}}

\DeclareMathOperator{\Frob}{Frob}
\DeclareMathOperator{\Gal}{Gal}

\DeclareMathOperator{\Norm}{Norm}

\DeclareMathOperator{\Res}{Res}

\DeclareMathOperator{\tr}{tr}

\newcommand{\GL}{\operatorname{GL}}

\providecommand{\case}[1]{\begin{cases} #1 \end{cases}}

\makeatletter
\renewcommand*\env@matrix[1][*\c@MaxMatrixCols c]{%
  \hskip -\arraycolsep
  \let\@ifnextchar\new@ifnextchar
  \array{#1}}
\makeatother

\providecommand{\GL}{\text{GL}}

\providecommand{\inv}{^{-1}}

\providecommand{\Res}{\text{Res}}

\renewcommand{\bar}[1]{\overline{#1}}
\renewcommand{\mod}[1]{\text{ (mod $#1$)}}

\providecommand{\N}{\mathbb{N}}
\providecommand{\R}{\mathbb{R}}
\providecommand{\Z}{\mathbb{Z}}
\providecommand{\Q}{\mathbb{Q}}

\renewcommand{\S}{\mathbb{S}}

\providecommand{\F}{\mathbb{F}}

\newcommand{\rad}{\text{rad}}

\theoremstyle{definition}

\newtheorem{theorem}{Theorem}[section]
\newtheorem{corollary}[theorem]{Corollary}
\newtheorem{remark}[theorem]{Remark}

\newtheorem{lemma}[theorem]{Lemma}
\newtheorem{prop}[theorem]{Proposition}
\newtheorem{defn}[theorem]{Definition}
\newtheorem{ex}[theorem]{Example}

\newtheorem{conj}[theorem]{Conjecture}

\definecolor{darkgreen}{rgb}{0,0.5,0}

\title{Sch\"affer's conjecture for even $k \le 42$}

\date{\today}

\thanks{I.C.\ is supported by NSERC Discovery Grant RGPIN-2023-03457. S.C.\ is supported by NSERC Discovery Grant RGPIN-2025-04264. J.H. is supported by an NSERC CGRS award.} 

\begin{document}

\begin{abstract} 
In 1956, Sch\"affer made a conjecture on the whole number solutions to the Diophantine equation $S_k(x):=1^k+2^k+\cdots+x^k=y^n$. In this paper we show how for even $k$, we can descend to a long list of three coupled equations of the form $Ax^p-By^p=1$, where $A,B\in\Q$. To this system, we apply the modular method, linear forms in logarithms, and local methods. As a result, we prove Sch\"affer's conjecture for even $k \le 42$.
\end{abstract}

\author{Imin Chen}

\address{Department of Mathematics, Simon Fraser University\\
Burnaby, BC V5A 1S6, Canada.} 
\email{ichen@sfu.ca}

\author{Stephen Choi}

\address{Department of Mathematics, Simon Fraser University\\
Burnaby, BC V5A 1S6, Canada.} 
\email{schoia@sfu.ca}

\author{James Houle}

\address{Department of Mathematics, Simon Fraser University\\
Burnaby, BC V5A 1S6, Canada.} 
\email{james\_houle@sfu.ca}

\subjclass[2020]{11D41; 11D61}

\maketitle

{
  \hypersetup{linkcolor=black}
  \tableofcontents
}

\section{Introduction}

Sch\"affer's conjecture \cite{schaeffer} concerns the following diophantine problem:

\begin{conj}
    \textit{(Sch\"affer's Conjecture)} Let $S_k(x) = 1^k + 2^k + \cdots + x^k$. Then the only solutions to the equation
    \begin{equation}
        \label{main}
        S_k(x) = y^n
    \end{equation}
    in $x, y, k, n \in \N$ and $n \ge 2$ is the trivial solution $(x,y,k,n) = (1,1,k,n)$ unless
    \begin{equation}
        \label{infinite-families}
        (k,n) \in \left\{ (1,2), (3,2), (3,4), (5,2) \right\},
    \end{equation}
    in which case there are infinitely many solutions in $x,y \in \N$, or 
    \[
        (x,y,k,n) = (24,70,2,2).    
    \]
\end{conj}

Sch\"affer showed that for fixed $(k,n)$ not in \eqref{infinite-families}, there are only finitely many solutions to \eqref{main}. However his method was ineffective \cite{schaeffer}. Since then, there have been many partial results towards proving the conjecture. In 1980, Gy\"ory, Tijdeman, and Voorhoeve \cite{GTV} provided an effective proof showing that for fixed $k\notin\{3,5\}$, \eqref{main} has only finitely many solutions. Using the fact that $S_k(x)$ has a closed form in terms of Bernoulli polynomials, we can write
\begin{equation*}
    S_k(x)=\case{
        \frac{1}{C_k}x^2(x+1)^2T_k(x) & k\text{ odd},\\
        \frac{1}{C_k}x(x+1)(2x+1)T_k(x) & k\text{ even},
    }
\end{equation*}
where $C_k\in\N$ and $T_k(x)\in\Z[x]$. Leveraging this structure, many authors have verified cases of Sch\"affer's conjecture by fixing values of $k$ and showing that there are no solutions for some or all $n$. Jacobson, Pint\'er, and Walsh \cite{JPW} verified the conjecture for even $2\le k\le 58$ and $n=2$ and Pint\'er \cite{Pinter} verified the conjecture for odd $1\le k<170$ and even $n$. One of the most complete results is due to Bennett, Gy\"ory, and Pint\'er \cite{BGP}, who proved the conjecture for each $1\le k\le 11$.

Taking a different approach, Hajdu \cite{Hajdu} verified the conjecture for certain congruence classes of $x$, for example, when $k=1$ or $k$ is even, and $x\equiv3,4\mod8$, then \eqref{main} has no solutions. Building on this work, B\'erczes, Hajdu, Miyazaki, and Pink \cite{BHMP} settled the conjecture when $x<25$.

In the work of \cite{BGP}, the factors $x$ and $x+1$ of $S_k(x)$ play a crucial role, allowing \eqref{main} to descend to a generalized Fermat equation of the form $Ax^n-By^n=1$ for $A,B\in\N$. We take this idea further and, for even values of $k$, descend \eqref{main} to a system of three such generalized Fermat equations with rational coefficients, rather than integer coefficients.

There is a large collection of tools that one can use to study such equations, for example, the theory of linear forms of logarithms introduced by Baker \cite{Baker} and the modular method, which became a staple ingredient for solving Diophantine equations after \cite{Wiles} \cite{TaylorWiles1995} \cite{BCDT2001} showed that all rational elliptic curves are modular.

It suffices to consider prime exponents $n = p$. For even $k$, we perform a detailed descent argument to a long list of coupled systems of three equations of the form
\begin{align*}
    A_0x^p-B_0y^p &= 1 \\
    A_1z^p-B_1y^p &= 1 \\
    A_2x^p-B_2z^p &= 1 
\end{align*}
where $A_i,B_i\in\Q$ and each coupled system represents a case in the descent. We generalize Kraus' recipe for computing the Serre level \cite{Kraus} in signature $(p,p,p)$, which allows us to apply the modular method to each coupled system of equations. The modular method is practical until the levels of all three equations in each system are too large.

In addition, we use linear forms in logarithms to bound the exponent $p$. After reducing to a finite list of the exponents, we then apply local methods to each exceptional exponent $p$. For the remaining small prime exponents, we use the Thue equation solver in {\tt Magma}. 

Additionally, use \cite{bennett} and \cite{Darmon-Merel} to treat equations with evident solutions.

We conclude by proving the following theorem:
\begin{theorem}
    For $2 \le k \le 42$ even, the equation
    \[
        S_k(x)=1^k+2^k+\cdots+x^k=y^n
    \]
    has no solutions $(x,y,n)\in\N^3$ with $x,y,n\ge2$.
\end{theorem}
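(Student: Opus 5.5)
We follow the strategy outlined in the introduction. First, it suffices to treat prime exponents $n=p$: if $S_k(x)=y^n$ with $n\ge 2$, then either $4\mid n$ or some odd prime $p\mid n$. The case $n=2$ (and hence $4\mid n$) for even $k\le 42$ is covered by Jacobson--Pint\'er--Walsh \cite{JPW}. The cases $k\le 10$ are covered by \cite{BGP}, though our argument can treat them uniformly. So we fix even $k\le 42$ and an odd prime $p$, and use
\[
C_k y^p = x(x+1)(2x+1)T_k(x).
\]

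\textbf{Descent.} The factors $x$, $x+1$, $2x+1$ are pairwise coprime. Moreover $\gcd(x(x+1)(2x+1),T_k(x))$ divides a resultant. That resultant is computable from $T_k$ and is supported on small primes, essentially those dividing $C_k$ and the denominators of Bernoulli numbers. For each prime $q$ dividing $C_k$ or these resultants, we enumerate the possible $q$-adic valuations of $x$, $x+1$, $2x+1$ modulo $p$. This lets us write
\[
x=a_0u^p,\qquad x+1=b_0v^p,\qquad 2x+1=c_0w^p,
\]
with $a_0,b_0,c_0\in\Q$ supported on a fixed finite set of primes and having exponents bounded independently of $p$. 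We may absorb $p$-th powers because the exponents are taken mod $p$. The identities $(x+1)-x=1$, $(2x+1)-2x=1$ and $2(x+1)-(2x+1)=1$ then give the coupled system of three equations $A_ix^p-B_iy^p=1$ from the introduction, one system per case. Cases in which $x$ is forced to be small, or in which the local conditions at primes dividing $C_k$ are contradictory, are discarded at this stage.

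\textbf{Modular method.} To each equation of each system we attach a Frey curve of signature $(p,p,p)$, after clearing denominators to obtain $a X^p + bY^p + cZ^p=0$ with coprime integers. We compute the conductor and Serre level via the generalization of Kraus' recipe \cite{Kraus}. Level lowering, using modularity \cite{Wiles,TaylorWiles1995,BCDT2001} and irreducibility for $p$ large, gives a newform of small level. For each newform we eliminate it by comparing traces $a_\ell$ with $a_\ell(E)$ or $\pm(\ell+1)$ modulo $p$, for auxiliary primes $\ell$. It is enough that one of the three equations in a system is eliminated for all $p$ outside a finite explicit set. This is where coupling helps: we can pick whichever of the three has the smallest level.

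Equations with evident solutions, such as $x=1$ or coefficient patterns like $x^p-2y^p=\pm1$, cannot be killed this way. For these we invoke \cite{Darmon-Merel} and \cite{bennett} ($|ax^p-by^p|=1$ has at most one solution) directly.

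\textbf{Large and small $p$.} For systems where every level is too large to compute newforms, we use linear forms in logarithms. From $A_0u^p-B_0v^p=1$ we get
\[
\bigl|\,p\log(u/v)+\log(A_0/B_0)\,\bigr|\ll u^{-p}.
\]
Laurent's two-logarithm bound then gives an absolute bound $p<P_0$ of moderate size. For each remaining prime $p$ (from either method) we apply local methods. We choose auxiliary primes $\ell=2mp+1$ and check that the three $p$-th power conditions, in $\F_\ell^\times/(\F_\ell^\times)^p$ together with compatibility of $x$ mod $\ell$, have no common solution. This is a Kraus-type sieve. Finally, for the small primes $p$ (e.g.\ $p\le 7$) where this sieve fails, we solve the resulting Thue equations $A u^p - B v^p=1$ in {\tt Magma} and check that only trivial solutions ($x=1$) arise.

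We expect the main obstacle to be the modular step for large $k$. There the coefficients, and hence the levels, become large, so many systems must instead be pushed through the linear-forms bound and a long local sieve. The hardest part will be ensuring that every descent case is closed off for every $p$.
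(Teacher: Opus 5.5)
Your plan follows the paper's proof essentially step for step. That means: reduction to $n=2$ via \cite{JPW} and to odd primes $p$; descent to coupled systems $A_ix^p-B_iy^p=1$ with rational coefficients; the modular method with the generalized Kraus levels; \cite{Darmon-Merel} and \cite{bennett} for evident solutions; two-logarithm bounds; a Kraus-type sieve at $\ell\equiv 1\pmod p$; and Thue solving for tiny $p$. Eliminating a system through a single equation, rather than intersecting the exponent sets of all three as the paper does, is equally valid.

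However, your $n=2$ step is false as used. For $k=2$ one has $S_2(24)=\frac{24\cdot 25\cdot 49}{6}=4900=70^2$, so the statement you are proving fails at $(x,y,k,n)=(24,70,2,2)$. What \cite{JPW} gives is that this is the only solution with $n=2$. Hence there is none with $4\mid n$, since $70$ is not a square. Your argument therefore establishes the theorem only with this Sch\"affer exception excluded. The paper's proof makes exactly the same appeal and has the same omission.

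There are also two smaller inaccuracies. First, the resultants are not supported on small primes dividing $C_k$ or Bernoulli denominators. From $S_k'(x)=B_k(x+1)$ one gets $T_k(0)=T_k(-1)=C_kB_k$, hence $\delta_0=\delta_1=|C_kB_k|$ and $\delta_2=(2^{k-1}-1)|C_kB_k|$. So the numerator of $B_k$ and the factors of $2^{k-1}-1$ appear: $691$ and $23\cdot 89$ for $k=12$, and the $20$-digit numerator $26315271553053477373$ of $B_{36}$ for $k=36$. Your procedure survives because you enumerate over all resultant primes. But whenever such a prime $q\nmid C_k$ divides $x$, one gets $v_q(x)\not\equiv 0\pmod p$, so $q$ enters the Frey level. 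This, not the growth of $C_k$, is why the modular method becomes infeasible for larger $k$.

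Second, evident solutions need not give $x=1$. For $k=4$, the descent case $x=2u^p$, $x+1=3v^p$, $2x+1=5w^p$ is genuinely solved by $u=v=w=1$. So nothing using only the three coupled equations can remove it. You must check $x=2$ against the original equation ($S_4(2)=17$ is not a perfect power), or keep the fourth relation \eqref{17} in play, as the paper does in Lemma~\ref{local-kraus-with-Tk}.
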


\section{Descent step}

For even $k$, we may write 
\begin{equation}
    S_k(x)=\frac{1}{C_{k}}x(x+1)(2x+1)T_{k}(x)   
    \label{2.2}
\end{equation}
where $C_k\in\Z$. Let $D=\gcd (x(x+1),T_{k}(x))$ and $d=\gcd (2x+1,T_{k}(x))$ where we consider the gcd over $\Z$ for a particular $x\in\Z$. For a fixed $k$, we will use the following result to get a finite list of possible values for $D$ and $d$.

\begin{lemma}
    \label{resultant-lemma}
    Let $f(x)=\alpha x+\beta, g(x)=\gamma x^n+\cdots\in\Z[x]$ such that $\alpha,\gamma\neq0$ and $|\Res(f,g,x)|=\delta$ is non-zero. Then $\forall x_0\in\Z$, 
    \begin{equation*}
        \gcd(f(x_0),g(x_0))\mid\delta.
    \end{equation*}
    Furthermore, let $p$ be a prime such that $p\mathrel\|\delta$, $p\nmid\alpha$, and $p\mid\alpha^{n-1}g'\left(-\frac{\beta}{\alpha}\right)$. If $p\mid\gcd(f(x_0),g(x_0))$, then $p\mathrel\| g(x_0)$.
\end{lemma}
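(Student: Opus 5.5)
The plan is to rewrite $g$ in terms of the linear form $f$ by the substitution $x = (f-\beta)/\alpha$, so that $\alpha^n g(x)$ becomes an integer polynomial in $f(x)$ whose constant term is the resultant. Both claims of Lemma~\ref{resultant-lemma} then follow by reducing this identity modulo a common divisor, and modulo $p^2$ for the refinement.

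\textbf{Step 1: integrality of the expansion.} Set $u = f(x) = \alpha x + \beta$, so $x = (u-\beta)/\alpha$. Taylor expansion of $g$ at $-\beta/\alpha$ gives
\[
\alpha^n g(x) = \sum_{j=0}^{n} \alpha^{n-j}\,\frac{g^{(j)}(-\beta/\alpha)}{j!}\, u^j .
\]
I want each coefficient $c_j := \alpha^{n-j} g^{(j)}(-\beta/\alpha)/j!$ to lie in $\Z$. Write $g = \sum_i g_i x^i$. Then $g^{(j)}(t)/j! = \sum_i \binom{i}{j} g_i t^{i-j}$ has integer coefficients, and it has degree at most $n-j$ in $t$. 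Hence multiplying by $\alpha^{n-j}$ and evaluating at $t = -\beta/\alpha$ clears all denominators. The constant term is $c_0 = \alpha^n g(-\beta/\alpha)$, which equals $\Res(f,g,x)$ up to sign by the standard formula for the resultant against a linear polynomial. So $|c_0| = \delta$.

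\textbf{Step 2: the gcd divides $\delta$.} Fix $x_0 \in \Z$ and let $e = \gcd(f(x_0), g(x_0))$. The identity from Step~1 reads $\alpha^n g(x_0) = c_0 + f(x_0)\,H(f(x_0))$ with $H \in \Z[u]$. The integer $e$ divides the left-hand side and divides $f(x_0)$, so $e \mid c_0$, hence $e \mid \delta$.

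\textbf{Step 3: the $p$-adic refinement.} Suppose $p \mid e$, with $p \,\|\, \delta$, $p \nmid \alpha$, and $p \mid c_1 = \alpha^{n-1} g'(-\beta/\alpha)$. Reduce the expansion modulo $p^2$.
\begin{itemize}
\item The term $c_1 f(x_0)$ is divisible by $p \cdot p$.
\item All terms with $j \ge 2$ are divisible by $f(x_0)^2$, hence by $p^2$.
\end{itemize}
Therefore $\alpha^n g(x_0) \equiv c_0 \pmod{p^2}$. Since $v_p(c_0) = 1$, this gives $v_p(\alpha^n g(x_0)) = 1$. Because $p \nmid \alpha$, we conclude $v_p(g(x_0)) = 1$, i.e.\ $p \,\|\, g(x_0)$.

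The only step needing care is the integrality of the $c_j$ in Step~1. It is exactly the degree bound $\deg g^{(j)}/j! \le n-j$ that makes the power $\alpha^{n-j}$ sufficient to clear denominators. Once that is settled, the rest is a direct congruence computation.
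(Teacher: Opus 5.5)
Your proof is correct and follows the paper's argument. The paper also expands $\alpha^n g\bigl((y-\beta)/\alpha\bigr)$ as $\Res(f,g,x)+\alpha^{n-1}g'(-\beta/\alpha)\,y+y^2(\cdots)$ with integer coefficients, and reduces modulo $p^2$ using $y=f(x_0)\equiv 0 \pmod p$. The differences are minor: the paper gets the first claim from the identity $A(x)f(x)+B(x)g(x)=\Res(f,g,x)$ rather than from your expansion, and it only asserts the integrality of the coefficients, which your Step~1 actually verifies.
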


\begin{proof}
    Recall that there are polynomials $A(x),B(x)\in\Z[x]$ such that
    \begin{equation}
        \label{resultant-equation}
        A(x)f(x)+B(x)g(x)=\Res(f,g,x)=\alpha^ng\left(-\frac{\beta}{\alpha}\right).
    \end{equation}
    Let $x_0\in\Z$. Then $\gcd(f(x_0),g(x_0))\mid\delta$ by \eqref{resultant-equation}. Suppose a prime $p\mid\gcd(f(x_0),g(x_0))$. Consider $\alpha^ng\left(\frac{y-\beta}{\alpha}\right)$ as a polynomial of degree $n$ in $\Z[y]$ and write
    \begin{equation*}
        \alpha^ng\left(\frac{y-\beta}{\alpha}\right) = \alpha^ng\left(-\frac{\beta}{\alpha}\right) + \alpha^{n-1}g'\left(-\frac{\beta}{\alpha}\right)y+y^2p(y)
    \end{equation*}
    for some $p(y)\in\Z[y]$. Then for any $x_1\in\Z$,
    \begin{equation*}
        \alpha^ng\left(\frac{px_1-\beta}{\alpha}\right)\equiv\alpha^ng\left(-\frac{\beta}{\alpha}\right)+\alpha^{n-1}g'\left(-\frac{\beta}{\alpha}\right)(px_1)\equiv\alpha^ng\left(-\frac{\beta}{\alpha}\right)=\Res(f,g,x)\mod{p^2},
    \end{equation*}
    where the last congruence is due to $p\mid\alpha^{n-1}g'\left(-\frac{\beta}{\alpha}\right)$. Therefore, $p^2\nmid\alpha^ng\left(\frac{px_1-\beta}{\alpha}\right)$ because $p\mathrel\|\Res(f,g,x)$. Since $p\mid f(x_0)=\alpha x_0+\beta$, write $x_0=\frac{px_1-\beta}{\alpha}$ for some $x_1\in\Z$. In particular, $p\mathrel\|\alpha^ng\left(\frac{px_1-\beta}{\alpha}\right)=\alpha^ng(x_0)$ and thus $p\mathrel\| g(x_0)$.
\end{proof}

For any even value of $k$, we may write
\begin{equation*}
    S_k(x)=\frac{1}{C_k}x(x+1)(2x+1)T_k(x),
\end{equation*}
where $C_k\in\Z$ and $T_k(x)\in\Z[x]$. For a fixed even $k$, let $D_0=\gcd(x,T_k(x))$, $D_1=\gcd(x+1,T_k(x))$ and $D_2=\gcd(2x+1,T_k(x))$. Using the lemma, we get a finite list of possible values for $D_0,D_1$, and $D_2$.

The values for $D_0,D_1,$ and $D_2$, along with $C_k$, are listed in Table~\ref{dtable}.
{\tiny
\begin{center}
\begin{table}[ht]
\label{dtable}
    \begin{tabular}{|c|c|c|c|}
        \hline
        $k$ & $D_0$ and $D_1$ divides & $D_2$ divides & $C_k$ \\
        \hline
        2 & 1 &  1 &  $2 \cdot 3$  \\
        
        4 & 1 & 7 & $2 \cdot 3 \cdot 5$ \\
        
        6 & 1 & 31 & $ 2\cdot 3 \cdot 7$ \\
        
        8 & 3 & $3\cdot127$ & $2 \cdot 3^2 \cdot 5$ \\

        10 & 5 & $5 \cdot 7 \cdot 73$ & $2 \cdot 3 \cdot 11$ \\

        12 & 691 & $23 \cdot 89 \cdot 691$ & $2 \cdot 3 \cdot 5 \cdot 7 \cdot 13$ \\
        
        14 & $3\cdot5\cdot7$ & $3\cdot5\cdot7\cdot8191$ & $2 \cdot 3^2 \cdot 5$ \\

        16 & 3617 & $7\cdot31\cdot151\cdot3617$ & $2\cdot3\cdot5\cdot17$ \\
        
        18 & $5\cdot43867$ & $5\cdot43867\cdot131071$ & $2\cdot3\cdot5\cdot7\cdot19$ \\

        20 & $3\cdot7\cdot283\cdot617$ & $3\cdot7\cdot283\cdot617\cdot524287$ & $2\cdot3^2\cdot5\cdot7\cdot11$ \\

        22 & $5\cdot11\cdot131\cdot593$ & $5\cdot7^2\cdot11\cdot127\cdot131\cdot337\cdot593$ & $2\cdot3\cdot5\cdot23$ \\

        24 & $5\cdot103\cdot2294797$ & $5\cdot47\cdot103\cdot178481\cdot2294797$ & $2\cdot3\cdot5^2\cdot7\cdot13$ \\

        26 & $3^2\cdot7\cdot13\cdot657931$ & $3^2\cdot7\cdot13\cdot31\cdot601\cdot1801\cdot657931$ & $2\cdot3^3\cdot7$\\

        28 & $7\cdot9349\cdot362903$ & $7^2\cdot73\cdot9349\cdot262657\cdot362903$ & $2\cdot3\cdot5\cdot29$ \\

        30 & $5\cdot1721\cdot1001259881$ & $5\cdot233\cdot1103\cdot1721\cdot2089\cdot1001259881$ & $2\cdot3\cdot7\cdot11\cdot31$ \\

        32 & $3\cdot7\cdot11\cdot37\cdot683\cdot305065927$ & $3\cdot7\cdot11\cdot37\cdot683\cdot305065927\cdot2147483647$ & $2\cdot3^2\cdot5\cdot7\cdot11\cdot17$\\

        34 & $5\cdot7\cdot17\cdot151628697551$ & $5\cdot7^2\cdot17\cdot23\cdot89\cdot599479\cdot151628697551$ & $2\cdot3\cdot5\cdot7$\\

        36 & $26315271553053477373$ & $31\cdot71\cdot127\cdot122921\cdot26315271553053477373$ & $2\cdot3\cdot5\cdot7\cdot13\cdot19\cdot37$ \\

        38 & $3\cdot5\cdot7\cdot13\cdot19\cdot154210205991661$ & $3\cdot5\cdot7\cdot13\cdot19\cdot223\cdot616318177\cdot154210205991661$ & $2\cdot3^2\cdot5\cdot7\cdot13$ \\

        40 & $7\cdot137616929\cdot1897170067619$ & $7^2\cdot79\cdot8191\cdot121369\cdot137616929\cdot1897170067619$ & $2\cdot3\cdot5\cdot7\cdot11\cdot41$ \\

        42 & $5 \cdot 11 \cdot 1520097643918070802691$ & $5 \cdot 11 \cdot 13367 \cdot 164511353 \cdot 1520097643918070802691$ & $2 \cdot 3 \cdot 5 \cdot 7 \cdot 11 \cdot 43$ \\

        \hline
    \end{tabular}
    \medskip
\caption{Summary of constants}
\end{table}
\end{center}
}

\begin{lemma}
    \label{resultant-valuation}
    Let $f(x)=\alpha x+\beta$ and $g(x)=\gamma x^n+\cdots\in\Z[x]$ such that $\alpha,\gamma\neq0$ and $|\Res(f,g,x)|=\delta$ is non-zero. Further, let $x_0\in\Z$ and $p$ be a prime such that $p\nmid\alpha$ and $v_p(f(x_0))>v_p(g(x_0))$. Then
    \[
        v_p(g(x_0)) = v_p(\delta).
    \]
    Here $p^{v_p(n)}\| n$.
\end{lemma}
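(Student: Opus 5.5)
Since $p\nmid\alpha$, the map $x\mapsto \alpha x+\beta$ can be inverted $p$-adically, so I will work in $\Z_p$. The plan is to expand $g$ around the root $-\beta/\alpha$ of $f$, exactly as in the proof of Lemma~\ref{resultant-lemma}. Consider
\[
h(y) \colonequals \alpha^n g\left(\frac{y-\beta}{\alpha}\right)\in\Z[y],
\]
which has constant term $h(0)=\alpha^n g(-\beta/\alpha)=\Res(f,g,x)$. Write $h(y)=\Res(f,g,x)+y\,q(y)$ with $q(y)\in\Z[y]$. Substituting $y=f(x_0)=\alpha x_0+\beta$ gives
\[
\alpha^n g(x_0) = \Res(f,g,x) + f(x_0)\,q(f(x_0)).
\]
This is the key identity. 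Equivalently, it says $\alpha^n g(x)\equiv \Res(f,g,x)\pmod{f(x)}$ in $\Z[x]$.

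Now take $p$-adic valuations. Put $a=v_p(f(x_0))$ and $b=v_p(g(x_0))$, so $a>b$ by hypothesis; note $b<\infty$ is forced, while $a$ may be infinite. Since $p\nmid\alpha$, we have $v_p(\alpha^n g(x_0))=b$. Because $q(f(x_0))\in\Z$, the second term satisfies
\[
v_p\bigl(f(x_0)\,q(f(x_0))\bigr)\ge a>b.
\]
Rearranging the identity,
\[
\Res(f,g,x)=\alpha^n g(x_0)-f(x_0)\,q(f(x_0)),
\]
a sum of a term of valuation exactly $b$ and a term of valuation strictly greater than $b$. By the ultrametric inequality (equality in the non-archimedean triangle inequality when the valuations differ), $v_p(\delta)=b=v_p(g(x_0))$.

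There is no serious obstacle. The only points needing care are that $h$ really has integer coefficients, which holds because each power $\alpha^n\bigl((y-\beta)/\alpha\bigr)^j$ with $j\le n$ is integral; that the degenerate case $f(x_0)=0$ is harmless, where the identity directly gives $\alpha^n g(x_0)=\Res(f,g,x)$; and that $\delta\neq0$, which ensures $v_p(\delta)$ is finite. The closing remark in the statement about $v_p(n)$ is just notation and plays no role in the argument.
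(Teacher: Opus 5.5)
Your proof is correct and is essentially the paper's argument: the paper also forms $G(y)=\alpha^n g\bigl(\frac{y-\beta}{\alpha}\bigr)\in\Z[y]$ with $|G(0)|=\delta$ and $G(f(x_0))=\alpha^n g(x_0)$, uses $y\mid G(y)-G(0)$ to get $v_p\bigl(G(f(x_0))-G(0)\bigr)\ge v_p(f(x_0))>v_p(g(x_0))$, and concludes by the ultrametric property. Your additional checks (integrality of $h$, finiteness of $v_p(g(x_0))$, and the degenerate case $f(x_0)=0$) are valid details that the paper leaves implicit.
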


\begin{proof}
    Write $G(y) := \alpha^ng\left(\frac{y-\beta}{\alpha}\right)\in\Z[y]$. First, note that
    \[
        |G(0)| = \left|\alpha^ng\left(\frac{0-\beta}{\alpha}\right)\right| =\left|\alpha^ng\left(-\frac{\beta}{\alpha}\right)\right|=|\delta|
    \]
    so that $v_p(G(0))=v_p(\delta)$. Further,
    \[
        G(f(x_0)) = \alpha^ng\left(\frac{\alpha x_0+\beta-\beta}{\alpha}\right) =\alpha^ng(x_0).
    \]
    Since $p\nmid\alpha$, we get
    \[
        v_p(G(f(x_0))=v_p(\alpha^ng(x_0))=v_p(g(x_0)).
    \]
    Next, we have that
    $y\mid G(y)-G(0)$ in $\Z[y]$. In particular, for $x_0\in\Z$, we have $f(x_0)\mid G(f(x_0))-G(0)$ and thus
    \[
        v_p(G(f(x_0))-G(0)) \ge v_p(f(x_0)).
    \]
    Hence,
    \[
        v_p(G(f(x_0))-G(0))\ge v_p(f(x_0))>v_p(g(x_0))=v_p(G(f(x_0))).
    \]
    Since $G(0) = G(f(x_0)) - (G(f(x_0))-G(0))$, by the ultrametric property we get
    \[
        v_p(\delta)=v_p(G(0)) = \min\{v_p(G(f(x_0))), v_p(G(f(x_0))-G(0))\}=v_p(G(f(x_0)))=v_p(g(x_0)).
    \]
\end{proof}

We'll also need the following lemma.

\begin{lemma}
    \label{coprime-rational-split}
    Suppose 
    \begin{equation*}
        a_0\cdots a_k=\frac{cy^n}{d}
    \end{equation*}
    for $a_i,c,d,y,n\in\N$, where $\gcd(a_i,a_j)=\gcd(c,d)=1$ for $i\neq j$. Then $\exists y_i,c_i,d_i\in\N$ such that
    \begin{align*}
        &c=c_0\cdots c_k,\quad d=d_{-1}d_0\cdots d_k,\quad y=y_0\cdots y_k ,\\
        &\gcd(d_i,d_j) = \gcd(c_i,c_j)=\gcd(y_i,y_j)=1\text{ for } i\neq j ,\\
        &d_i = \prod_{p\mid a_i}p^{v_p(d)} ,\\
        &d_{-1} = \prod_{p\nmid a_0\cdots a_k}p^{v_p(d)} ,\\
        &c_i = \gcd(c,a_i) ,\\
        &a_i = \frac{c_iy_i^n}{d_i}.
    \end{align*}
\end{lemma}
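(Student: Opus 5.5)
The plan is to define $c_i$ and $d_i$ exactly as the statement prescribes and then extract the $y_i$ prime by prime. Set $c_i=\gcd(c,a_i)$ and $d_i=\prod_{p\mid a_i}p^{v_p(d)}$ for $0\le i\le k$, and $d_{-1}=\prod_{p\nmid a_0\cdots a_k}p^{v_p(d)}$. Since the $a_i$ are pairwise coprime, every prime divides at most one $a_i$. Hence the $d_i$ ($-1\le i\le k$) have pairwise disjoint prime supports, and their product is $d$. Likewise the $c_i$ are pairwise coprime.

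The next step is to check that $c=c_0\cdots c_k$, which amounts to showing that every prime $p\mid c$ divides some $a_i$ with $v_p(c)\le v_p(a_i)$. Write the hypothesis as $d\,a_0\cdots a_k=c\,y^n$. If $p\mid c$, then $p\nmid d$ because $\gcd(c,d)=1$. So $v_p(a_0\cdots a_k)=v_p(c)+n v_p(y)\ge v_p(c)>0$. Thus $p$ divides exactly one $a_i$, and $v_p(a_i)\ge v_p(c)$, which gives $v_p(c_i)=v_p(c)$. Taking the product over primes yields $c=\prod c_i$.

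It remains to define $y_i$ and verify $a_i=c_iy_i^n/d_i$. Define $y_i=\prod_{p\mid a_i}p^{v_p(y)}$. I must check that every prime of $y$ divides some $a_i$, so that $y=\prod y_i$. If $p\mid y$ and $p\nmid a_0\cdots a_k$, then the equation gives $v_p(d)=v_p(c)+nv_p(y)>0$, so $p\mid d$ and hence $v_p(c)=0$. This is possible: such primes go into $d_{-1}$, with $v_p(d_{-1})=nv_p(y)$. But then $y$ would have prime factors not captured by $\prod y_i$. So the claimed $y=y_0\cdots y_k$ holds only if $d_{-1}$ absorbs them.

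To repair this, I would instead set $y_{-1}=\prod_{p\nmid a_0\cdots a_k}p^{v_p(y)}$ and note that $d_{-1}=y_{-1}^n$. Then $y=y_{-1}y_0\cdots y_k$. In the intended application $d_{-1}=1$, for instance when $d$ divides a product of the $a_i$. Following the statement literally, I would remark that $y_{-1}$ can be merged into any $y_i$, with $d_i$ adjusted accordingly. The cleanest resolution is to read the statement with $d_{-1}=y_{-1}^n$ accounted for.

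Finally, I compare $p$-adic valuations prime by prime, for $p\mid a_i$. In the equation $d\,a_0\cdots a_k=cy^n$, only $a_i$ contributes among the $a$'s, so $v_p(d_i)+v_p(a_i)=v_p(c_i)+nv_p(y_i)$. For primes not dividing $a_i$, all of $a_i$, $c_i$, $d_i$ and $y_i$ have valuation zero. Hence $d_ia_i=c_iy_i^n$. The only delicate point is the bookkeeping for primes outside $a_0\cdots a_k$, treated above.
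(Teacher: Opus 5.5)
Your argument is correct, and you have found a real defect in the statement: the clause $y=y_0\cdots y_k$ is false whenever $d_{-1}>1$. For example, $k=0$, $a_0=c=1$, $y=2$, $d=2^n$ satisfies the hypotheses. The prescribed formulas give $d_0=c_0=1$ and $d_{-1}=2^n$, so $a_0=c_0y_0^n/d_0$ forces $y_0=1\neq y$. In general $y_i$ is determined by $y_i^n=a_id_i/c_i$ and is supported on primes dividing $a_i$.

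Consequently, your suggestion of merging $y_{-1}$ into some $y_i$ cannot rescue the literal statement, since it would break the stated formulas for $d_i$ and $d_{-1}$. Drop that remark and prove the corrected version, $y=y_{-1}y_0\cdots y_k$ with $d_{-1}=y_{-1}^n$, which is what your valuation computation gives. Your aside that $d_{-1}=1$ in the application is not established in the paper: Proposition~\ref{descent} carries $d_{-1}$ along and only excludes particular primes from it. This costs nothing, because the later uses (e.g.\ $u_0u_1u_2\mid y$ in Lemma~\ref{local-kraus-with-Tk}) follow from the corrected version.

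Your route is organized differently from the paper's. The paper first treats $d=1$. There it uses $\gcd(c,a_0\cdots a_k)=\prod_i\gcd(c,a_i)$ and the fact that pairwise coprime positive integers whose product is an $n$th power are each $n$th powers. It then clears denominators, $d_{-1}(d_0a_0)\cdots(d_ka_k)=cy^n$, and applies the $d=1$ case to the $k+2$ pairwise coprime factors $d_{-1},d_0a_0,\dots,d_ka_k$. The hypothesis $\gcd(c,d)=1$ gives $c_{-1}=1$ and $\gcd(c,d_ia_i)=\gcd(c,a_i)$.

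Carried out faithfully, this produces exactly $d_{-1}=y_{-1}^n$ and $y=y_{-1}y_0\cdots y_k$; the paper's line ``$y=y_1\cdots y_k$'' silently drops $y_{-1}$. Your single prime-by-prime comparison of valuations in $d\,a_0\cdots a_k=cy^n$ is more direct. It also brings the bookkeeping for primes outside $a_0\cdots a_k$ into the open, which is exactly where the stated lemma goes wrong. The paper's reduction is more modular but hides that point.
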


\begin{proof}
    We first prove the case when $d = 1$. Since $c\mid a_0\cdots a_k$ so that $\gcd(c,a_0\cdots a_k)=c$. Along with the assumption that $\gcd(a_i,a_j)=1$, we have
    $\gcd(c,a_0)\cdots\gcd(c,a_k)=\gcd(c,a_0\cdots a_k)=c$ and $\gcd(c,a_i)$ are pairwise coprime. Write $c_i=\gcd(c,a_i)$. Then
    \begin{equation*}
        \left(\frac{a_0}{c_0}\right)\cdots\left(\frac{a_k}{c_k}\right)=y^n.
    \end{equation*}
    Since $\gcd\left(\frac{a_i}{c_i},\frac{a_j}{c_j}\right)=1$ for $i\neq j$, we have that for any prime $p$,  $v_p\left(\left(\frac{a_0}{c_0}\right)\cdots\left(\frac{a_k}{c_k}\right)\right) = nv_p(y)$ so $n\mid v_p\left(\left(\frac{a_0}{c_0}\right)\cdots\left(\frac{a_k}{c_k}\right)\right)$. Thus, $v_p\left(\frac{a_i}{c_i}\right) \equiv 0 \pmod n$, and by unique factorization we have
    \[  
        \frac{a_i}{c_i}=y_i^n
    \]
    for some $y_i\in\N$.

    Now we treat the case of general $d$. Define $d_i=\prod_{p\mid a_i} p^{v_p(d)}$ for $i=0,...,k$ and $d_{-1} = \prod_{p\nmid a_0\cdots a_k} p^{v_p(d)}$ so that $d_i$ are pairwise coprime. Then we have
    \begin{equation}
        (d_0a_0)\cdots(d_ka_k)d_{-1}=cy^n
    \end{equation}
    and $d_{-1},d_ia_i$ are pairwise coprime since $d_i$ and $a_i$ are. By the case when $d = 1$, there exist $y_i,c_i\in\N$ such that $c=c_0 \cdots c_k$, $y = y_1 \cdots y_k$, $\gcd(c_i,c_j)=\gcd(y_i,y_j)=1$ for $i \not= j$, 
    \[
        d_i a_i = c_i y_i ^n, \quad c_{-1}=\gcd(c,d_{-1}),\quad\text{and}\quad c_i=\gcd(c,d_ia_i) \text{ for $i=0,...,k$}.
    \]
    Since $\gcd(c,d)=1$ and $d_{-1}\mid d$, we have $c_{-1}=\gcd(c,d_{-1})=1$. Similarly, $\gcd(d,c_ia_i)=\gcd(d,a_i)$.
\end{proof}

Fix $k\in\N$ even and write
\begin{align*}
    S_k(x)=\frac{1}{C_k}x(x+1)(2x+1)T_k(x).
\end{align*} 
Let $\delta_0=|\Res(x,T_k(x),x)|,\delta_1=|\Res(x+1,T_k(x),x)|$, and $\delta_2=|\Res(2x+1,T_k(x),x)|$. Let $p$ be a prime and consider the following hypothesis:
\begin{align}
    &p\mid\gcd(x,T_k(x)), \text{ }p\mathrel\|\delta_0\text{ and } \delta_0\mid T_k'(0), \label{delta0} \\
    &p\mid\gcd(x+1,T_k(x)),\text{ }p\mathrel\|\delta_1, \text{ and } \delta_1\mid T_k'(-1), \text{ and} \label{delta1} \\
    &p\mid\gcd(2x+1,T_k(x)), \text{ }p\neq2, \text{ }p\mathrel\|\delta_2, \text{ and } \delta_2\mid 2^{\deg T_k-1}T_k' (-1/2). \label{delta2}
\end{align}
In view of Lemma \ref{resultant-lemma}, if any of \eqref{delta0}, \eqref{delta1}, or \eqref{delta2} are satisfied, then  $p\mathrel\|T_k(x)$.

\begin{prop}
\label{descent}
    For even $k$, suppose
    \begin{equation}
        \label{main-equation}
        S_k(x) = \frac{1}{C_{k}}x(x+1)(2x+1)T_{k}(x) = y^n
    \end{equation}
    has solution $x,y,n$ in $\N$ with $n \ge 2$. Write $f_0(x)=x, f_1(x)=x+1$, $f_2(x)=2x+1$. For $i=0,1,2$, let $D_i = \gcd(f_i(x),T_k(x))$, so that the $D_i$ are pairwise coprime and thus $\frac{T_k(x)}{D_0D_1D_2}\in\Z$. Then for some pairwise coprime $\alpha_i,d_j \in \N$, $0\le i\le 3$,$-1\le j\le 3$ satisfying
    \begin{equation}
        \alpha_0\alpha_1\alpha_2\alpha_3=\frac{C_k}{\gcd(D_0^2D_1^2D_2^2,C_k)},\quad d_{-1}d_0d_1d_2d_3=\frac{D_0^2D_1^2D_2^2}{\gcd(D_0^2D_1^2D_2^2,C_k)}, \quad
            d_i\mid\frac{D_i^2}{\gcd(D_i^2,C_k)}
    \end{equation}
    for $i=0,1,2$, 
    \begin{equation*}
        d_0\frac{x}{D_0}, \quad d_1\frac{x+1}{D_1}, \quad d_2\frac{2x+1}{D_2}, \quad d_3\frac{T_k(x)}{D_0D_1D_2}
    \end{equation*}
    are pairwise coprime and the system of equations 
    \begin{align}
        \alpha_1D_1\frac{u_1^n}{d_1}-\alpha_0D_0\frac{u_0^n}{d_0} &= 1 \label{14}\\
        \alpha_2D_2\frac{u_2^n}{d_2}-2\alpha_0D_0\frac{u_0^n}{d_0} &= 1 \label{15} \\
        2\alpha_1D_1\frac{u_1^n}{d_1} - \alpha_2D_2\frac{u_2^n}{d_2} &= 1 \label{16} \\
        \frac{T_k(x)}{D_0D_1D_2} &= \alpha_3\frac{u_3^n}{d_3} \label{17}
    \end{align}
    is solvable in $u_0, u_1, u_2 \in \N$. Further, $f_i(x) = \alpha_iD_i\frac{u_i^n}{d_i}$,
    \[
        v_2\left(\frac{\alpha_0D_0}{d_0}\frac{\alpha_1D_1}{d_1}\right)\equiv1\mod n, \text{ and } v_3\left(\frac{\alpha_0D_0}{d_0}\frac{\alpha_1D_1}{d_1}\frac{\alpha_2D_2}{d_2}\right) \equiv1\mod n.
    \]
    Finally, suppose $n\ge3$. Let $\delta_i=|\Res(f_i(x),T_k(x),x)|$. If $p\mid d_i$, then $v_p(D_i)=v_p(\delta_i)$. Instead, let $p\mid D_i$ be a prime. 
    \begin{enumerate}
        \item If \eqref{delta0} holds, then $v_p(d_0)=v_p\left(\frac{D_0^2}{\gcd(D_0^2,C_k)}\right)$ and $p\nmid\alpha_3$. \label{7.1}
        \item If \eqref{delta1} holds, then $v_p(d_1)=v_p\left(\frac{D_1^2}{\gcd(D_1^2,C_k)}\right)$ and $p\nmid\alpha_3$, \label{7.2}
        \item If \eqref{delta2} holds, then then $v_p(d_2)=v_p\left(\frac{D_2^2}{\gcd(D_2^2,C_k)}\right)$ and $p\nmid\alpha_3$. \label{7.3}
    \end{enumerate}
\end{prop}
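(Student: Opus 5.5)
The plan is to rewrite \eqref{main-equation} as a product of four pairwise coprime integers equal to a rational multiple of $y^n$, and then apply Lemma~\ref{coprime-rational-split}.

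\textbf{Setting up the coprime factorization.} First I check that the $D_i$ are pairwise coprime. This holds because $x$, $x+1$ and $2x+1$ are pairwise coprime, and each $D_i$ divides the corresponding $f_i(x)$; consequently $D_0D_1D_2\mid T_k(x)$. Put $a_0=x/D_0$, $a_1=(x+1)/D_1$, $a_2=(2x+1)/D_2$ and $a_3=T_k(x)/(D_0D_1D_2)$. Then \eqref{main-equation} becomes
\[
a_0a_1a_2a_3 = \frac{C_k\,y^n}{D_0^2D_1^2D_2^2}.
\]
Clearing $g=\gcd(D_0^2D_1^2D_2^2,C_k)$ gives the form $cy^n/d$ with $c=C_k/g$ and $d=D_0^2D_1^2D_2^2/g$ coprime.

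The $a_i$ need not be pairwise coprime: a prime $p\mid D_0$ may still divide both $x/D_0$ and $T_k(x)/D_0$. To handle this, I multiply each $a_i$ by the part $d_i$ of $d$ supported on its primes, exactly as in the proof of Lemma~\ref{coprime-rational-split}. I assign each prime of $d$ to a single factor, choosing the one in which it actually divides the quotient; any prime dividing none of them goes to $d_{-1}$. This makes the products $d_ia_i$ pairwise coprime, and the lemma then yields $\alpha_i$, $u_i$ with
\[
d_i a_i=\alpha_i u_i^n, \qquad\text{that is,}\qquad f_i(x)=\alpha_iD_iu_i^n/d_i .
\]
The divisibility $d_i\mid D_i^2/\gcd(D_i^2,C_k)$ for $i\le2$ follows because the primes of $d_i$ divide $D_i$. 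I expect this bookkeeping of which factor a shared prime is assigned to to be the main fiddly obstacle.

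\textbf{The linear system and the congruences.} Equations \eqref{14}--\eqref{16} are just the linear relations
\[
(x+1)-x=1,\qquad (2x+1)-2x=1,\qquad 2(x+1)-(2x+1)=1,
\]
after substituting $f_i(x)=\alpha_iD_iu_i^n/d_i$. Equation \eqref{17} is the case $i=3$.

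For the $2$-adic congruence, I use that exactly one of $x$ and $x+1$ is even, while $2x+1$ and $T_k(x)$ are odd. The latter needs checking: I expect $C_k$ to absorb the single factor of $2$, and the point is that $S_k$ has the correct $2$-adic shape. Then
\[
v_2(y^n)=v_2\bigl(x(x+1)\bigr)-1 ,
\]
and writing $x(x+1)=\bigl(\tfrac{\alpha_0D_0}{d_0}\tfrac{\alpha_1D_1}{d_1}\bigr)(u_0u_1)^n$ gives the claim. The $3$-adic congruence works the same way: $3$ divides $x(x+1)(2x+1)$, $3$ exactly divides $C_k$ up to the factor in $T_k$, and $3\nmid T_k$ in the relevant cases. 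If $3\mid T_k$ were possible, I would instead argue via the valuation of $\alpha_3u_3^n/d_3$, but I expect that the paper's choice of normalization makes $v_3(C_k/3)$ cancel.

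\textbf{The final claims for $n\ge3$.} If $p\mid d_i$, then $p$ divides $D_i$, and $p$ was placed in $d_i$, so $p$ divides $f_i(x)/D_i$. Hence
\[
v_p(f_i(x))>v_p(D_i)=v_p(T_k(x)),
\]
where the equality holds because $p\nmid a_3$ after the assignment. Lemma~\ref{resultant-valuation} then gives $v_p(T_k(x))=v_p(\delta_i)$, and therefore $v_p(D_i)=v_p(\delta_i)$.

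Under \eqref{delta0}--\eqref{delta2}, Lemma~\ref{resultant-lemma} gives $p\,\|\,T_k(x)$, so $v_p(D_i)=1$ and $p\nmid a_3$. Comparing valuations in $d_ia_i=\alpha_iu_i^n$, and using $n\ge3>2$, forces the entire $p$-part of $d$ into $d_i$ and none of it into $\alpha_3$. This gives $v_p(d_i)=v_p\bigl(D_i^2/\gcd(D_i^2,C_k)\bigr)$ and $p\nmid\alpha_3$.
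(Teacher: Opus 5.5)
Your overall route (apply Lemma~\ref{coprime-rational-split} to the four quotients, read off the linear system, then use Lemmas~\ref{resultant-valuation} and~\ref{resultant-lemma} for the $n\ge3$ claims) is the paper's. However, the $2$-adic and $3$-adic congruences are not proved. The paper does not derive them from the shape of $C_k$ and $T_k$. It imports them from \cite[Lemmas 3.1 and 3.2]{Hajdu}, which for even $k$ amount to $v_2(S_k(x))=v_2(x(x+1))-1$ and $v_3(S_k(x))=v_3(x(x+1)(2x+1))-1$. Then $S_k(x)=y^n$ forces $v_2(x(x+1))\equiv v_3(x(x+1)(2x+1))\equiv 1\pmod{n}$. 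Your substitute is a list of expectations, and its $3$-adic part is false. For $k=8$ one has $C_8=2\cdot3^2\cdot5$ and $T_8(x)=5x^6+15x^5+5x^4-15x^3-x^2+9x-3\equiv 2x^2(x^2-1)^2\pmod{3}$, so $3\mid T_8(x)$ for every integer $x$. What is actually required is $v_3(T_k(x))=v_3(C_k)-1$ (and $v_2(T_k(x))=v_2(C_k)-1=0$) for \emph{every} integer $x$. That is a genuine arithmetic property of power sums, not a normalization artifact, and when $9\mid C_k$ it cannot be seen modulo $3$ alone. Your fallback via $\alpha_3u_3^n/d_3$ cannot supply it either, since $v_3(T_k(x))$ is an input to that decomposition rather than an output. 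You must either cite Hajdu's lemmas or prove these valuation identities.

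There is also a false step in the setup. The quotients $a_0,\dots,a_3$ \emph{are} pairwise coprime. If a prime $p$ divided both $x/D_0$ and $T_k(x)/D_0$, then $pD_0$ would divide $\gcd(x,T_k(x))=D_0$; and $a_3$ divides each $T_k(x)/D_i$. Moreover $\gcd(a_i,a_j)\mid\gcd(f_i(x),f_j(x))=1$ for $i\ne j\le 2$. This is exactly the hypothesis of Lemma~\ref{coprime-rational-split}, which the paper then applies verbatim. Your remedy would not have helped had your premise been true, since multiplying $a_i$ by $d_i$ never removes a prime shared by two factors. So, as written, your appeal to the lemma is unjustified, even though its conclusion holds. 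Finally, $p\nmid d_{-1}$ in the last part does not follow from $d_ia_i=\alpha_iu_i^n$. Argue as the paper does: if $p\mid d_{-1}$, then $p\nmid a_0a_1a_2a_3$ and $p\nmid c$. The equation $a_0a_1a_2a_3=cy^n/d$ then gives $nv_p(y)=v_p(d)\in\{1,2\}$ (because $v_p(D_i)=1$), which is impossible for $n\ge3$. Also, $p\nmid\alpha_3$ simply because $\alpha_3=\gcd(c,a_3)$ divides $a_3$.
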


\begin{proof}
    We first note that $D_0,D_1$, and $D_2$ are pairwise coprime since $x$, $x+1$, and $2x+1$ are. Multiplying \eqref{main-equation} by $\frac{C_k}{D_0^2D_1^2D_2^2}$, we have
    \begin{equation}
        \label{main-equation-divided}
        \frac{x}{D_0}\frac{x+1}{D_1}\frac{2x+1}{D_2}\frac{T_{k}(x)}{D_0D_1D_2} = \frac{C_ky^n}{D_0^2D_1^2D_2^2} =  \frac{C_k}{\gcd(D_0^2D_1^2D_2^2,C_k)}\frac{y^n}{\frac{D_0^2D_1^2D_2^2}{\gcd(D_0^2D_1^2D_2^2,C_k)}}.
    \end{equation}
    Since $\gcd\left(\frac{C_k}{\gcd(D_0^2D_1^2D_2^2,C_k)}, \frac{D_0^2D_1^2D_2^2}{\gcd(D_0^2D_1^2D_2^2,C_k)}\right)=1$ and $\frac{x}{D_0},\frac{x+1}{D_1},\frac{2x+1}{D_2}$, and $\frac{T_k(x)}{D_0D_1D_2}$ are pairwise coprime, by Lemma \ref{coprime-rational-split} there exist pairwise coprime $\alpha_0,\alpha_1,\alpha_2,\alpha_3,d_{-1},d_0,d_1,d_2,d_3\in\N$ such that
    \begin{equation*}
        d_0\frac{x}{D_0},\quad d_1\frac{x+1}{D_1},\quad d_2\frac{2x+1}{D_2}, \quad d_3\frac{T_k(x)}{D_0D_1D_2}
    \end{equation*}
    are pairwise coprime,
    \begin{equation*}
        \frac{x}{D_0}=\alpha_0\frac{u_0^n}{d_0}, \quad \frac{x+1}{D_1}=\alpha_1\frac{u_1^n}{d_1},\quad \frac{2x+1}{D_2}=\alpha_2\frac{u_2^n}{d_2},\quad\frac{T_k(x)}{D_0D_1D_2}=\alpha_3 \frac{u_3^n}{d_3},
    \end{equation*}
    such that
    \begin{equation*}
        \alpha_0\alpha_1\alpha_2\alpha_3=\frac{C_k}{\gcd(D_0^2D_1^2D_2^2,C_k)},\quad d_{-1}d_0d_1d_2d_3=\frac{D_0^2D_1^2D_2^2}{\gcd(D_0^2D_1^2D_2^2,C_k)},
    \end{equation*}
    and 
    \begin{equation*}
        d_0 = \prod_{p\mid \frac{x}{D_0}}p^{v_p},\quad d_1 = \prod_{p\mid \frac{x+1}{D_1}}p^{v_p},\quad d_2 = \prod_{p\mid \frac{2x+1}{D_2}}p^{v_p}, \quad d_3 = \prod_{p\mid \frac{T_k(x)}{D_0D_1D_2}}p^{v_p},
    \end{equation*}
    and 
    \begin{equation*}
        d_{-1}=\prod_{p\nmid \frac{x(x+1)(2x+1)T_k(x)}{D_0^2D_1^2D_2^2}}p^{v_p},
    \end{equation*}
    where $v_p=v_p\left(\frac{D_0^2D_1^2D_2^2}{\gcd(D_0^2D_1^2D_2^2,C_k)}\right)$. As a consequence of \cite[Lemma 3.1 and Lemma 3.2]{Hajdu}, we have that $v_2(x(x+1)) \equiv v_3(x(x+1)(2x+1))\equiv1\mod n$. In particular,
    \[
        1\equiv v_2(x(x+1)) \equiv v_2\left(\frac{\alpha_0D_0u_0^n}{d_0}\frac{\alpha_1D_1u_1^n}{d_1}\right) \equiv v_2\left(\frac{\alpha_0D_0}{d_0}\frac{\alpha_1D_1}{d_1}\right) \mod n
    \]
    and similarly $v_3\left(\frac{\alpha_0D_0}{d_0}\frac{\alpha_1D_1}{d_1}\frac{\alpha_2D_2}{d_2}\right) \equiv1\mod n.$
    
    We can substitute and rearrange the expressions for $x,x+1$, and $2x+1$ to get the following three equations
    \begin{align*}
        \alpha_1D_1\frac{u_1^n}{d_1}-\alpha_0D_0\frac{u_0^n}{d_0} &= 1,\\
        \alpha_2D_2\frac{u_2^n}{d_2}-2\alpha_0D_0\frac{u_0^n}{d_0} &= 1, \\
        2\alpha_1D_1\frac{u_1^n}{d_1} - \alpha_2D_2\frac{u_2^n}{d_2} &= 1.
    \end{align*}

    Fix $i\in\{0,1,2\}$. Suppose $p\mid d_i$ so that $p\mid\frac{f_i(x)}{D_i}$ and $v_p>0$. Then $v_p(f_i(x))>v_p(D_i)$ and hence $v_p(f_i(x))>v_p(T_k(x))$. Thus,
    \[
        v_p(D_i)=\min\{v_p(f_i(x)),v_p(T_k(x))\} = v_p(T_k(x))<v_p(f_i(x)).
    \]
    By Lemma \ref{resultant-valuation}, we have $v_p(D_i)=v_p(T_k(x))=v_p(\delta_i)$. Next, since $D_0,D_1,D_2$ are pairwise coprime, $\gcd(D_0^2D_1^2D_2^2,C_k)=\gcd(D_0^2,C_k)\gcd(D_1^2,C_k)\gcd(D_2^2,C_k)$. Then 
    \begin{equation}
        v_p=v_p\left(\frac{D_0^2D_1^2D_2^2}{\gcd(D_0^2D_1^2D_2^2,C_k)}\right) = v_p\left(\frac{D_0^2}{\gcd(D_0^2,C_k)}\right) +v_p\left(\frac{D_1^2}{\gcd(D_1^2,C_k)}\right) +v_p\left(\frac{D_2^2}{\gcd(D_2^2,C_k)}\right).
    \end{equation}
    Since $p\mid f_i(x)$ and $D_0,D_1,D_2$ are pairwise coprime, $p\nmid D_j$ for $j\neq i$ and in particular,
    \begin{equation*}
        v_p = v_p\left(\frac{D_i^2}{\gcd(D_i^2,C_k)}\right).
    \end{equation*}
    Thus
    \begin{equation*}
        d_i\mid\frac{D_i^2}{\gcd(D_i^2,C_k)}.
    \end{equation*}
    
    For the remainder of the proof, suppose that condition \eqref{7.1} is satisfied, that is for a prime $p\mid D_0$, we have $\delta_0\neq0$, $p\mathrel\|\delta_0$, and $\delta_0\mid T_k'(0)$. Then by Lemma \ref{resultant-lemma},  $p\mathrel\|T_k(x)$. Since 
    \[
        \frac{T_k(x)}{D_0D_1D_2}=\alpha_3\frac{u_3^n}{d_3},
    \]
    $\gcd(\alpha_3,d_3)=1$, and $\frac{T_k(x)}{D_0D_1D_2}\in\Z$, we have that $\alpha_3\mid\frac{T_k(x)}{D_0D_1D_2}$. Thus $p\nmid\alpha_3$. Additionally, if $p\mid\frac{D_0^2}{\gcd(D_0^2,C_k)}$, then $p\mid D_0$ so that $p\nmid\frac{T_k(x)}{D_0D_1D_2}$ and thus $p\nmid d_3$.

    Let $p\mid d_{-1}$ so that  $p\mid\frac{D_0^2D_1^2D_2^2}{\gcd(D_0^2D_1^2D_2^2,C_k)}$ and  $p\nmid\frac{x(x+1)(2x+1)T_k(x)}{D_0^2D_1^2D_2^2}$. Since $D_0,D_1,D_2$ are pairwise coprime, without loss of generality assume that $p\mid\frac{D_0^2}{\gcd(D_0^2,C_k)}$. In particular, we have $p\nmid\frac{x}{D_0}$ and thus 
    \begin{equation*}
        v_p(x)=v_p(D_0),\quad v_p(D_1)=0=v_p(D_2).
    \end{equation*}
    As $p\mathrel\|T_k(x)$, we have $v_p(D_0)=1$ so that $v_p(D_0^2D_1^2D_2^2)=2$. Thus by equation \eqref{main-equation-divided},
    \begin{equation*}
        0 = v_p\left(\frac{x(x+1)(2x+1)T_k(x)}{D_0^2D_1^2D_2^2}\right) =  v_p\left(\frac{C_k}{\gcd(D_0^2D_1^2D_2^2,C_k)}\right) + nv_p(y) - v_p\left(\frac{D_0^2D_1^2D_2^2}{\gcd(D_0^2D_1^2D_2^2,C_k)}\right).
    \end{equation*}
    If $v_p(C_k)=0$, then we get $0 = 0 + nv_p(y)-2$ so that $nv_p(y)=2$, which is a contradiction when $n\ge3$. If $v_p(C_k)=1$, then we get $0 = nv_p(y)-1=0$ so that $nv_p(y)=1$, which again is a contradiction when $n\ge 2$. Finally, if $v_p(C_k)\ge2$ then
    \begin{equation}
        v_p\left(\frac{D_0^2D_1^2D_2^2}{\gcd(D_0^2D_1^2D_2^2,C_k)}\right)=0
    \end{equation}
    which is a contradiction. Thus $p\nmid d_{-1}$. We also have that if $p\mid\frac{D_0^2}{\gcd(D_0^2,C_k)}$, then $p\mid x$ and so $p\nmid x+1,2x+1$. Hence $p\nmid d_1,d_2$. Since $d_{-1}d_0d_1d_2d_3=\frac{D_0^2D_1^2D_2^2}{\gcd(D_0^2D_1^2D_2^2,C_k)}$, we thus have
    \begin{equation}
        v_p(d_0)=v_p\left(\frac{D_0^2}{\gcd(D_0^2,C_k)}\right).
    \end{equation}
    The cases when condition \eqref{7.2} or \eqref{7.3} are satisfied follow similarly.
\end{proof}

\begin{remark}
    \label{descent-symmetry}
    The contrapositive of Proposition \ref{descent} is if $\forall\alpha_i,d_i\in\N$, $i=0,1,2,3$, satisfying the conditions outlined in the proposition, the system of equations \eqref{14}-\eqref{16} is insolvable in $u_0,u_1,u_2\in\N$, then there are no solutions to \eqref{main} for a fixed $k$. Suppose $\alpha_1>\alpha_0$ and that $(X,Y,Z)=(u_0,u_1,u_2)$ solves the system
    \begin{align*}
        \alpha_1D_1\frac{Y^n}{d_1}-\alpha_0D_0\frac{X^n}{d_0} &= 1\\
        \alpha_2D_2\frac{Z^n}{d_2}-2\alpha_0D_0\frac{X^n}{d_0} &= 1 \\
        2\alpha_1D_1\frac{Y^n}{d_1} - \alpha_2D_2\frac{Z^n}{d_2} &= 1
    \end{align*}
    where $\frac{x}{D_0}=\alpha_0\frac{u_0^n}{d_0}, \frac{x+1}{D_1}=\alpha_1\frac{u_1^n}{d_1},$ and $\frac{2x+1}{D_2}=\alpha_2\frac{u_2^n}{d_2}$. Call this system (a). Note $u_0,u_1,u_2>0$. Then (if $n$ is odd) $(X,Y,Z)=(-u_1,-u_0,-u_2)$ solves the system
    \begin{align*}
        \alpha_0D_0\frac{X^n}{d_0}-\alpha_1D_1\frac{Y^n}{d_1} &= 1\\
        \alpha_2D_2\frac{Z^n}{d_2}-2\alpha_1D_1\frac{Y^n}{d_1} &= 1 \\
        2\alpha_0D_0\frac{X^n}{d_0} - \alpha_2D_2\frac{Z^n}{d_2} &= 1
    \end{align*}
    Call this system (b). Consider the set $S$ of all possible systems descending from \eqref{main}. Then system (a) is in $S$ if and only if system (b) is in $S$. Thus, instead of showing that each system in $S$ has no solutions in $\N$, we can show that each system in $S$ such that $\alpha_1>\alpha_0$ has no solutions $(u_0,u_1,u_2)$ in $\Z$, such that $u_0,u_1,u_2$ have the same sign and $\frac{x}{D_0}=\alpha_0\frac{u_0^n}{d_0}$ if $u_0>0$ or $\frac{x}{D_1}=-\alpha_1\frac{u_1^n}{d_1}$ if $u_0<0$.
\end{remark}

\section{The modular method}

Without loss of generality, it suffices to consider the case of $n = p$ prime. Using Proposition \ref{descent}, solutions to \eqref{main} give rise to solutions to \eqref{14}--\eqref{16} which are of generalized Fermat type with one of the variables set to $1$.

We now briefly summarize the modular method as it is applied to resolve generalized Fermat type equations of the form
\begin{equation}
\label{fermat-equ}
  A x^p + B y^p + C z^p = 0,
\end{equation}
where $A=\frac{p_1}{q_1},B=\frac{p_2}{q_2},C=\frac{p_3}{q_3}\in\Q$ are non-zero with $p_1,p_2,p_3,q_1,q_2,q_3$ pairwise coprime which are not $p$th powers and such that $p \nmid ABC$ and $Ax^p,By^p,Cz^p\in\Z$. Our treatment follows \cite{houle2025}.

A solution $(a,b,c) \in \Z^3$ of \eqref{fermat-equ} is primitive if $a,b,c$ are mutually coprime. Up to permutation of the terms and negation in \eqref{fermat-equ}, we can assume that $By^p$ is even and $A x^p \equiv -1 \pmod 4$.

To a primitive non-trivial solution $(a,b,c)$ of \eqref{fermat-equ} we attach the Frey elliptic curve
\begin{equation}
    Y^2 = X (X -A a^p) (X + B b^p).
\end{equation}

Consider the Galois representation
\[ \rhobar_{E,p} : G_\Q \rightarrow \GL_2(\F_p) \] attached to the elliptic curve $E$ over $\Q$, which is obtained by letting $G_\Q := \Gal(\overline{\Q}/\Q)$ act linearly on $E[p]$. As the $E$ is a Frey elliptic curve over $\Q$, we have by modularity and level lowering that
\begin{equation}
\label{modp-isom}
    \rhobar_{E,p} \simeq \rhobar_{f,\Fp}
\end{equation}
where $f$ is a classical newform  of weight $2$, level $N = N(\bar\rho_{E,p})$ equal to the Serre level of $\rhobar_{E,p}$, and trivial character and $\Fp$ is a prime of the coefficient field $K$ of $f$ above $p$. 

By \cite{houle2025}, we can compute the Serre level $N$ of $\rhobar_{E,p}$. Let $R = ABC$. If $v_2(ABC)\ge0$, then
\begin{equation}
    \label{kraus-N}
    N(\bar\rho_{E,p}) = \case{
        2\rad'(R) & \text{if $v_2(ABC)=0$ or $v_2(ABC)\ge5$}, \\
        \rad'(R) & \text{if $v_2(ABC)=4$}, \\
        2^5\rad'(R) & \text{if $v_2(ABC)=1$ and $b$ is odd}, \\
        2^3\rad'(R) & \text{if $v_2(ABC)=2,3$ and $b$ is odd}, \\
        2\rad'(R) & \text{if $1\le v_2(ABC)\le3$ and $b$ is even},
    }
\end{equation}
where $\rad'(n)$ denotes the product of all odd primes dividing $n$.
If instead $v_2(ABC)<0$, then
\begin{equation}
    \label{james-N}
    N(\bar\rho_{E,p}) = \case{
        \rad'(R) & \text{if $v_2(ABC)=4-p$}, \\
        2^5\rad'(R) & \text{if $v_2(abc)=1$ and $v_2(ABC)=1-p$,} \\
        2^3\rad'(R) & \text{if $v_2(abc)=1$ and $v_2(ABC)=2-p,3-p$,} \\
        2\rad'(R) & \text{if $v_2(abc)\ge2$ and $v_2(ABC)\neq 4-p$,} \\
        2\rad'(R) & \text{if $v_2(abc)=1$, $p\ge 7$, and $v_2(ABC)\ge 5-p$.}
    }
\end{equation}
    
The conductor $N_E$ of $E/\Q$ is square-free except for powers of $2$. Hence, the Frey elliptic curve has either good or multiplicative bad reduction modulo primes $\ell \not=2$.

Computing the finite set of newforms $f$ of weight $2$, level $N$, and trivial character we now consider the isomorphism in \eqref{modp-isom} for each possible $f$. The idea behind the modular method is to show this is not possible for each $f$, i.e.\  eliminate every newform $f$ as a possibility in the isomorphism \eqref{modp-isom} for $p$ sufficiently large.

Let $\ell \nmid Np$ be a prime so that $\rhobar_{E,p}$ is unramified at $\ell$. Let $\Frob_\ell \in G_\Q := \Gal(\overline{\Q}/\Q)$ be a Frobenius element at $\ell$. We may thus consider the well defined trace value 
\begin{equation}
    a_\ell(f) = \tr \rho_{f,\Fp}(\Frob_\ell) \in \calO_{K},
\end{equation}
where $\rho_{f,\Fp}$ is the $\Fp$-adic representation attached to the newform $f$.

If $\ell \nmid N_E$, where $N_E$ is the conductor of $E/\Q$, then $E$ has good reduction at $\ell$ and we may consider
\begin{equation}
    a_\ell(E) = \tr \rho_{E,p}(\Frob_\ell) \in \Z,
\end{equation}
where $\rho_{E,p}$ is the $p$-adic representation attached to $E$.

The isomorphism \eqref{modp-isom} implies that
\begin{equation}
    a_\ell(E) \equiv a_\ell(f) \pmod \Fp,
\end{equation}
and hence
\begin{equation}
\label{good-reduc}
    \Norm_{K/\Q}(a_\ell(E)-a_\ell(f)) \equiv 0 \pmod p
\end{equation}
where $\Norm_{K/\Q}(\cdot)$ is the norm from $K$ to $\Q$. If the above norm is non-zero, we obtain a bound on the prime exponent $p$, i.e.\ for $p$ sufficiently large, \eqref{modp-isom} does not hold for the newform $f$.

If $\ell \mid N_E$, then $E$ has multiplicative bad reduction at $\ell$. It can be shown that $\text{WD}_\ell(\left\{\rho_{E,p}\right\})$ is a special representation with character being an unramified character of order dividing $2$. It follows that
\begin{equation}
\pm (\ell+1) \equiv a_\ell(f) \pmod \Fp,
\end{equation}
and hence
\begin{equation}
\label{bad-reduc}
    \Norm_{K/\Q}((\ell+1)^2-a_\ell(f)^2) \equiv 0 \pmod p,
\end{equation}
which gives a bound on the prime exponent $p$ as the above norm is non-zero by the Deligne-Ramanujan bounds, i.e.\ for $p$ sufficiently large, \eqref{modp-isom} does not hold for the newform $f$.

Let $S_2(N)$ denote the space of weight $2$ cusp forms of level $N$. In what follows, when we write $f \in S_2(N)$ we restrict to $f$ being a newform.
\begin{lemma}
\label{p-values}
    Let $(a,b,p)$ be a solution to 
    \begin{equation}
    \label{GFE}
        Ax^n-By^n=1
    \end{equation}
    where $A,B\in\Q$ are non-zero with pairwise coprime numerators and denominators such that either $v_2(A)>0$ or $v_2(B)>0$ and $Aa^p,Bb^p\in\Z$. Let $\mathcal N$ be the set of possible levels given by \eqref{kraus-N}. Then if $\ell$ is a prime number such that $\ell\nmid N$ for any $N\in\mathcal N$,
    \begin{equation*}
        p\in\bigcup_{N\in\mathcal N}\bigcup_{f\in S_2(N)}\bigcup_{w\in W_\ell} \left\{p \text{ prime} : p\mid B_\ell(f,w)\text{ or } p=\ell\right\},
    \end{equation*}
    where 
    \[
        W_\ell=\left\{(w_0,w_1)\in(\Z/\ell\Z)^2: Aw_0^p-Bw_1^p\equiv1\mod \ell\right\},
    \]
    and 
    \begin{equation*}
        B_\ell(f,w)= \case{
            \Norm_{K_f/\Q}(\#E(w) - a_\ell(f)) \quad \text {if $E(w)$ is non-singular},\\
            \Norm_{K_f/\Q}(a_\ell(f)^2-(\ell+1)^2) \quad \text{if $E(w)$ is singular}.
        }
    \end{equation*}
    Further, if $Q$ is any set of primes such that $\forall \ell\in Q$, $\ell\nmid N$ for all $N\in\mathcal N$, then 
    \begin{equation*}
        p\in \bigcup_{N\in\mathcal N}\bigcup_{f\in S_2(N)}\bigcap_{\ell\in Q}\bigcup_{w\in W_\ell}\left\{p \text{ prime} : p\mid B_\ell(f,w)\text{ or } p=\ell\right\}.
    \end{equation*}
\end{lemma}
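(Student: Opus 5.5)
We argue by contradiction through the modular method set up above. Let $(a,b,p)$ be a solution, so $Aa^p - Bb^p - 1 = 0$ is a primitive solution of an equation of the shape \eqref{fermat-equ} with third variable $1$. Attach the Frey curve $E = E(a,b)$, after permuting and negating terms so that the even term sits in the middle position and the term $\equiv -1 \pmod 4$ comes first; the hypothesis $v_2(A)>0$ or $v_2(B)>0$ makes such a normalization possible. By modularity and level lowering we get \eqref{modp-isom}: $\rhobar_{E,p}\simeq\rhobar_{f,\Fp}$ for some newform $f\in S_2(N)$, where $N=N(\rhobar_{E,p})$ is the Serre level. The formula \eqref{kraus-N} shows that $N$ depends only on $A,B$ and on the $2$-adic data of the solution (for instance the parity of $b$). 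Hence $N\in\mathcal N$ whatever the solution, and we may fix one $N\in\mathcal N$ and one $f\in S_2(N)$.

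Now fix a prime $\ell$ with $\ell\nmid N$ for every $N\in\mathcal N$, and assume $p\neq\ell$. Put $w=(a \bmod \ell,\, b\bmod \ell)$. Reducing the equation mod $\ell$ gives $Aw_0^p-Bw_1^p\equiv1 \pmod\ell$. This needs $A,B$ to be $\ell$-integral, which holds because $\ell\nmid N$ forces $\ell\nmid \rad'(R)$ and so $\ell$ divides no numerator or denominator of $A,B$. The case $\ell=2$ needs a separate word, since $\rad'$ ignores $2$; but $2$ always divides some $N$ in the cases of \eqref{kraus-N} that occur, or else we simply take $\ell$ odd. Thus $w\in W_\ell$. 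The reduction of $E$ mod $\ell$ is the curve $E(w)$ given by the same Weierstrass equation, since the coefficients $Aa^p$ and $Bb^p$ reduce to $Aw_0^p$ and $Bw_1^p$.

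We then split into two cases, according to whether $E(w)$ is non-singular or singular.

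\textbf{Non-singular case.} Here $E$ has good reduction at $\ell$ and $a_\ell(E)=\ell+1-\#E(w)(\F_\ell)$. Since $\ell\nmid Np$, \eqref{good-reduc} gives $p\mid\Norm_{K_f/\Q}(a_\ell(E)-a_\ell(f))$. The quantity $B_\ell(f,w)$ is written with $\#E(w)$ in place of $a_\ell(E)$, so we read $\#E(w)$ as shorthand for the trace $\ell+1-\#E(w)(\F_\ell)$. Reconciling this notation with the paper's is the only bookkeeping point to check.

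\textbf{Singular case.} The Frey curve has a model with $c_4$ and $\Delta$ coprime away from $2$ for primitive solutions, so its reduction at odd $\ell$ is multiplicative. Then \eqref{bad-reduc} gives $p\mid\Norm_{K_f/\Q}(a_\ell(f)^2-(\ell+1)^2)$.

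In both cases $p\mid B_\ell(f,w)$ for this particular $w\in W_\ell$, or else $p=\ell$. That proves the first inclusion.

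For the second statement, keep $N$ and $f$ fixed, determined by the solution. Then for every $\ell\in Q$ the argument above places $p$ in $\bigcup_{w\in W_\ell}\{p : p\mid B_\ell(f,w)\text{ or }p=\ell\}$. Since $f$ does not depend on $\ell$, $p$ lies in the intersection over $\ell\in Q$, and taking the union over the possible $(N,f)$ gives the claim.

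The main obstacle is justifying that the level produced by level lowering really lies in $\mathcal N$ and that $\ell\nmid N$ gives good-or-multiplicative reduction at $\ell$ with $\ell$-integral coefficients. For this we lean on \cite{houle2025} and the remark that $N_E$ is squarefree away from $2$.
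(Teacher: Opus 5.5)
Your proposal is correct and follows the paper's proof: Frey curve plus level lowering to some $f\in S_2(N)$ with $N\in\mathcal N$, reduction of the solution modulo $\ell$ to get $w\in W_\ell$, and the good versus multiplicative reduction congruences. You read $\#E(w)$ as $a_\ell(E)=\ell+1-\#\tilde E(\F_\ell)$, which is what the paper's proof actually uses. You then intersect over $\ell\in Q$ with $f$ held fixed, which you make more explicit than the paper does.

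One correction to your aside on $\ell=2$. In the generality of the statement, $2$ need not divide any $N\in\mathcal N$: if $v_2(ABC)=4$, every level is $\rad'(R)$, which is odd. In characteristic $2$ every curve $Y^2=\text{cubic}$ is singular, yet $E$ can have good reduction at $2$ in that case, so the singular branch of $B_2(f,w)$ would be wrong. The lemma therefore genuinely needs $\ell$ odd, and your fallback of taking $\ell$ odd is necessary rather than optional. The paper's proof passes over this as well.
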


\begin{proof}
    If $(a,b,p)$ is a solution to \eqref{GFE} then by the above discussion, there is a newform form $f$ of weight 2 with level $N\in\mathcal{N}$ such that 
    \begin{equation*}
        E:Y^2=\case{ 
            X(X+1)(X+Aa^p) & \text{if } v_2(A)>0, \\
            X(X+1)(X-Bb^p) & \text{if } v_2(B)>0.}
    \end{equation*}
    We have $(\Tilde{a},\Tilde{b})\in W_\ell$, where $\Tilde{a}$ and $\Tilde{b}$ are the reductions of $a$ and $b$ respectively mod $\ell$. Let $w=(\Tilde{a},\Tilde{b})$. Then 
    \begin{equation}
        \Tilde{E}(\F_\ell)=E(w).
    \end{equation}
    Further, if $E(w)$ is non-singular, then  $a_\ell(E)\equiv a_\ell(f)\mod{\mathfrak p}$, or $a_\ell(f)^2\equiv (\ell+1)^2\mod{\mathfrak p}$ if $E(w)$ is singular. In either case, the congruence holds if and only if $p\mid B_\ell(f,w)$. Since $N(\bar\rho_{E,p})\in\mathcal N$, we thus have that 
    \[
        p\in \bigcup_{N\in\mathcal N}\bigcup_{f\in S_2(N)}\bigcup_{w\in W_\ell} \left\{p \text{ prime} : p\mid B_\ell(f,w)\right\}.
    \]
    Finally, since this holds for any prime $\ell\nmid\prod_{N\in\mathcal N}N$, it holds for the intersection across arbitrarily many such $\ell$.
\end{proof}

In particular, if for all possible levels $N\in\mathcal{N}$ and all weight 2 newforms $f$ of level $N$, there exists $\ell\nmid N$ prime such that $\forall w\in W_\ell$, $\{p\text{ prime}:p\mid B_\ell(f,w)\}$ is finite, then we have that $p$ is one of only finitely many possibilities.

\begin{corollary}
    \label{system-p-values}
    Suppose we have a system of three equations, each of the form as in Lemma \ref{p-values},
    \begin{align}
        A_0x^p-B_0y^p &= 1 \label{system-0}\\
        A_1z^p-B_1y^p &= 1 \label{system-1}\\
        A_2x^p-B_2z^p &= 1. \label{system-2}
    \end{align}
    Let $\mathcal N^{(i)}$ denote the set of possible levels of $A_i,B_i$, and $\mathcal N=\mathcal N^{(0)}\cup\mathcal N^{(1)}\cup\mathcal N^{(2)}$. Let $\ell$ be a prime such that $\ell\nmid N$ for all $N\in\mathcal N$. Let $W_\ell$ be the set of triples $w=(w_0,w_1,w_2)\in(\Z/\ell\Z)^3$ such that the system \eqref{system-0}-\eqref{system-2} is satisfied when viewed modulo $\ell$. Let $B_\ell^{(i)}(f,w)$ be defined as in Lemma \ref{p-values} for equations \eqref{system-0}-\eqref{system-2} respectively. Then for fixed $i=0,1,2$,
    \begin{equation*}
        p\in \bigcup_{N\in\mathcal N^{(i)}}\bigcup_{f\in S_2(N_i)}\bigcap_{\ell\in Q}\bigcup_{w\in W_\ell}\left\{p \text{ prime} : p\mid B_\ell^{(i)}(f,w)\text{ or } p=\ell\right\}.
    \end{equation*}
    Further, let $S_2(N_0,N_1,N_2)=S_2(N_0)\times S_2(N_1)\times S_2(N_2)$. Then,
    \begin{equation*}
        p\in 
        \bigcup_{\substack{(N_0,N_1,N_2)\in\\\mathcal{N}_0\times\mathcal{N}_1\times\mathcal{N}_2}}
        \bigcup_{\substack{(f_0,f_1,f_2)\in\\S_2(N_0,N_1,N_2)}}
        \bigcap_{\ell\in Q}
        \bigcup_{w\in W_\ell}
        \left\{p\text{ prime}:p\mid B_\ell^{(i)} (f_i,w),\forall i=1,2,3\text{ or } p=\ell \right\}.
    \end{equation*}
\end{corollary}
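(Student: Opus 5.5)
The plan is to reduce the corollary to Lemma \ref{p-values}, applied to each equation of the system separately, and then strengthen the conclusion using the fact that the three equations share variables. Suppose $(a,b,c)$ is a solution of \eqref{system-0}--\eqref{system-2} with exponent $p$. Then $(a,b)$ solves \eqref{system-0}, $(c,b)$ solves \eqref{system-1}, and $(a,c)$ solves \eqref{system-2}, each with coefficients of the shape required in Lemma \ref{p-values}. For each $i$ we therefore get a Frey curve $E_i$ and a newform $f_i$ of some level $N_i\in\mathcal N^{(i)}$ with $\rhobar_{E_i,p}\simeq\rhobar_{f_i,\Fp_i}$.

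Fix $\ell\in Q$. Since $\ell\nmid N$ for every $N\in\mathcal N$, it does not divide any of the $N_i$. Reduce the triple $(a,b,c)$ modulo $\ell$ to get $w=(\tilde a,\tilde b,\tilde c)$. This $w$ satisfies all three congruences simultaneously, so $w\in W_\ell$; this is the only new point compared with the one-equation lemma. The reduction of $E_i$ modulo $\ell$ is the curve $E_i(w)$ built from the two coordinates of $w$ that occur in equation $i$.

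The argument in the proof of Lemma \ref{p-values} then applies verbatim to each $i$. If $E_i(w)$ is non-singular, comparing traces of Frobenius gives $a_\ell(E_i)\equiv a_\ell(f_i)\pmod{\Fp_i}$. If $E_i(w)$ is singular, the reduction is multiplicative and the level-lowering congruence gives $a_\ell(f_i)^2\equiv(\ell+1)^2\pmod{\Fp_i}$. Taking norms, either $p\mid B^{(i)}_\ell(f_i,w)$ or $p=\ell$ (the case $\ell=p$ is where the trace argument breaks down). Crucially, the \emph{same} $w$ works for all $i=0,1,2$ at once, because it comes from a single triple.

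For fixed $i$, the first displayed statement follows by taking the union over $w\in W_\ell$, intersecting over $\ell\in Q$ (the tuple $(N_i,f_i)$ does not depend on $\ell$), and then taking the union over $N\in\mathcal N^{(i)}$ and $f\in S_2(N)$. For the joint statement, the triple $(N_0,N_1,N_2)$ and the forms $(f_0,f_1,f_2)$ are fixed by the solution independently of $\ell$. For each $\ell$ there is a single $w\in W_\ell$ with $p\mid B^{(i)}_\ell(f_i,w)$ for all $i$ (or $p=\ell$), so $p$ lies in the intersection over $\ell$ of the unions over $w$, and hence in the outer union.

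The only real subtlety is the bookkeeping of quantifiers: the newforms must be chosen before intersecting over $\ell$, and $w$ must be common to all three equations for each $\ell$. The modular input is inherited entirely from Lemma \ref{p-values}.
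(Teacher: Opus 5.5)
Your proof is correct. For the single-equation statement it matches the paper: run the argument of Lemma~\ref{p-values} for each $i$, noting only that the reduction of the solution lies in the system's $W_\ell$. For the joint statement you take a different route, and it is the more careful one.

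The paper gets the joint statement formally. It intersects the three single-equation conclusions over $i$ and rewrites the result by set algebra. Two of its moves are legitimate: distributing $\bigcap_i$ over the unions over newforms, and commuting $\bigcap_i$ with $\bigcap_{\ell\in Q}$. The final move is not. Writing $Z_i(w)=\{p : p\mid B_\ell^{(i)}(f_i,w)\text{ or } p=\ell\}$, it pushes $\bigcap_i$ inside $\bigcup_{w\in W_\ell}$. But
\[
\bigcap_i\bigcup_{w\in W_\ell}Z_i(w)\supseteq\bigcup_{w\in W_\ell}\bigcap_i Z_i(w)
\]
is only an inclusion, and it points the wrong way for deducing the claim. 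What the set algebra actually gives lets each equation have its own witness $w_i\in W_\ell$.

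Your argument goes back to the solution and fixes $(f_0,f_1,f_2)$ before intersecting over $\ell$. It then uses the reduction of $(a,b,c)$ modulo $\ell$ as a single witness common to all three equations. That is exactly the information the stronger single-$w$ conclusion needs, and it is lost once you pass to the three separate conclusions. The cost is that the joint statement is no longer a formal consequence of the individual ones, but that cost is unavoidable.
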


\begin{proof}
    For fixed $i=0,1,2$, the proof of
    \[
        p\in \bigcup_{N\in\mathcal N^{(i)}}\bigcup_{f\in S_2(N_i)}\bigcap_{\ell\in Q}\bigcup_{w\in W_\ell}\left\{p \text{ prime} : p\mid B_\ell^{(i)}(f,w)\text{ or } p=\ell\right\}
    \]
    follows as in the proof of Lemma \ref{p-values}, with the only difference being that $W_\ell$ is the set of solutions to the whole system \eqref{system-0}-\eqref{system-2} when viewed modulo $\ell$ instead of just a single equation. Since this is true for each $i=0,1,2$, it must be simultaneously true for all $i$. Thus
    \begin{align*}
        p&\in\bigcap_{i=0}^2\left(\bigcup_{N\in\mathcal N^{(i)}}\bigcup_{f\in S_2(N_i)}\bigcap_{\ell\in Q}\bigcup_{w\in W_\ell}\left\{p \text{ prime} : p\mid B_\ell^{(i)}(f,w)\text{ or } p=\ell\right\} \right)\\
        &= \bigcup_{\substack{(N_0,N_1,N_2)\in\\\mathcal{N}_0\times\mathcal{N}_1\times\mathcal{N}_2}}
        \bigcup_{\substack{(f_0,f_1,f_2)\in\\S_2(N_0,N_1,N_2)}}
        \bigcap_{\ell\in Q}
        \bigcup_{w\in W_\ell}
        \bigcap_{i=0}^2\left\{p \text{ prime} : p\mid B_\ell^{(i)}(f,w)\text{ or } p=\ell\right\} \\
        &= 
        \bigcup_{\substack{(N_0,N_1,N_2)\in\\\mathcal{N}_0\times\mathcal{N}_1\times\mathcal{N}_2}}
        \bigcup_{\substack{(f_0,f_1,f_2)\in\\S_2(N_0,N_1,N_2)}}
        \bigcap_{\ell\in Q}
        \bigcup_{w\in W_\ell}
        \left\{p\text{ prime}:p\mid B_\ell^{(i)} (f_i,w),\forall i=0,1,2\text{ or } p=\ell \right\}.
    \end{align*}
\end{proof}

\begin{remark}
    \begin{enumerate}
        \item Note that
        \begin{align*}
            &\bigcup_{N\in\mathcal N^{(i)}}\bigcup_{f\in S_2(N_i)}\bigcap_{\ell\in Q}\bigcup_{w\in W_\ell}\left\{p \text{ prime} : p\mid B_\ell^{(i)}(f,w)\text{ or } p=\ell\right\} \\
            &= \bigcap_{\ell\in Q}\bigcup_{N\in\mathcal N^{(i)}}\bigcup_{f\in S_2(N_i)}\bigcup_{w\in W_\ell}\left\{p \text{ prime} : p\mid B_\ell^{(i)}(f,w)\text{ or } p=\ell\right\}.
        \end{align*}
        This similarly holds when considering the intersection across the three equations.
        
        \item Suppose there exists a prime $\ell$ such that $v_\ell(A_0),v_\ell(A_2)>0$, and $v_\ell(A_1B_1)=0$. Then from \eqref{system-0}, we get that
        \begin{equation*}
            1=A_0x^p-B_0y^p\equiv-B_0y^p\mod \ell\Longrightarrow y^p\equiv-B_0\inv\mod \ell.
        \end{equation*}
        Similarly, from \eqref{system-2},
        \begin{equation*}
            z^p\equiv-B_2\inv.
        \end{equation*}
        Thus, there is only one possible solution to \eqref{system-1} when viewed modulo $\ell$. We can get a similar result for equations \eqref{system-0} and \eqref{system-2}. Thus, when computing the possible values of $p$, we will apply the corollary with $Q$ being the set of primes such that there is a unique solution to one of the equations modulo $\ell$.

        \item $Q$ need not be a fixed set. Instead, we can let it depend on the newform $f$. Indeed, fix $i=0,1,2$. Then for any prime $\ell\nmid \prod_{N\in\mathcal N^{(i)}}N$ and some $w\in W_\ell$, we have that $p\mid B_\ell^{(i)}(f,w)$. In particular, let $Q_f$ be a set of such primes $\ell$ depending somehow on the choice of $f$. It is still the case that $p\mid B_q^{(i)}(f,w)$ for any $\ell\in Q_f$ so we get that
        \begin{equation}
            p\in \bigcup_{N\in\mathcal N^{(i)}}\bigcup_{f\in S_2(N_i)}\bigcap_{\ell\in Q_f}\bigcup_{w\in W_\ell}\left\{p \text{ prime} : p\mid B_\ell^{(i)}(f,w)\text{ or } p=\ell\right\}.
        \end{equation}
        Similarly, 
        \begin{equation*}
            p\in 
            \bigcup_{\substack{(N_0,N_1,N_2)\in\\\mathcal{N}_0\times\mathcal{N}_1\times\mathcal{N}_2}}
            \bigcup_{\substack{(f_0,f_1,f_2)\in\\S_2(N_0,N_1,N_2)}}
            \bigcap_{\ell\in Q_{f_0,f_1,f_2}}
            \bigcup_{w\in W_\ell}
            \left\{p\text{ prime}:p\mid B_\ell^{(i)} (f_i,w),\forall i=1,2,3\text{ or } p=\ell \right\}.
        \end{equation*}
        When computing this set, we'll often let $Q_{f_0,f_1,f_2}$ be the set of all suitable primes $\ell$ up to a certain bound, or until the first $\ell$ is found such that the set
        \[
            \bigcup_{w\in W_\ell}
            \left\{p\text{ prime}:p\mid B_\ell^{(i)} (f_i,w),\forall i=1,2,3\text{ or } p=\ell \right\}
        \]
        is finite.
        \item A similar result holds if we pick any two equations. Let $0\le i\neq j\le2$. Then we have that
        \begin{equation*}
            p\in \bigcup_{\substack{(N_i,N_j)\in\\\mathcal{N}_i\times\mathcal{N}_j}}
            \bigcup_{\substack{(f_i,f_j)\in\\S_2(N_i,N_j)}}
            \bigcap_{\ell\in Q}
            \bigcup_{w\in W_\ell}
            \left\{p\text{ prime}:p\mid B_\ell^{(i)} (f_i,w)\text{ and } p\mid B_\ell^{(j)}(f_j,w),\text{ or } p=\ell \right\},
        \end{equation*}
        where $S_2(N_i,N_j)=S_2(N_i)\times S_2(N_j)$.
    \end{enumerate}
\end{remark}

The following result is used to discard many base case equations which have an evident solution.
\begin{theorem}
    \label{DM}
    \cite[Main Theorem]{Darmon-Merel} The equation $x^n+y^n=2z^n$ has no primitive solutions $(a,b,c) \in \Z^3$ when $n\ge3$ unless $abc = 0, \pm 1$.
\end{theorem}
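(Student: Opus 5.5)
Since this is the Main Theorem of \cite{Darmon-Merel}, in the paper we simply cite it; but if I had to prove it, I would follow the modular method of the previous section. First reduce to prime exponents: if $n$ has an odd prime factor $p$, replace $(a,b,c)$ by $(a^{n/p},b^{n/p},c^{n/p})$; otherwise $4\mid n$. The case $n=4$ I would handle classically. Writing $a^4+b^4=2c^4$ and squaring gives $(a^4-b^4)^2=(2c^4)^2-4a^4b^4=4(c^8-a^4b^4)$, so $(c^2)^4-(ab)^4=\bigl((a^4-b^4)/2\bigr)^2$. The equation $X^4-Y^4=Z^2$ has only trivial solutions by Fermat's infinite descent, which forces $ab=0$ or $|a|=|b|$. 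The case $p=3$ I would also do separately, by Euler's descent, or by noting that $x^3+y^3=2z^3$ defines an elliptic curve of rank $0$ over $\Q$ whose torsion points are the evident ones. So the substantial case is $p\ge 5$.

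For $p\ge 5$ and a primitive solution with $abc\neq0,\pm1$, I would note that $a,b$ are odd and normalize so that $a\equiv -1\pmod 4$. I then attach the Frey curve $E: Y^2=X(X-a^p)(X+b^p)$ exactly as above, with $A=B=1$ and $C=-2$. Its minimal discriminant is, up to a power of $2$, $(abc)^{2p}$, so $E$ is semistable away from $2$. Ribet's level lowering, via \eqref{kraus-N} with $v_2(ABC)=1$ and $b$ odd, gives $\rhobar_{E,p}\simeq\rhobar_{f,\Fp}$ for a newform $f$ of level $32$. (A $2$-adic check shows that the level can be $32$ or $64$ depending on normalization; in either case the space is $1$-dimensional.) The unique newform is attached to the CM curve $y^2=x^3-x$ with CM by $\Q(i)$.

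The main obstacle is exactly this step: the trace comparisons \eqref{good-reduc} and \eqref{bad-reduc} can never eliminate $f$, because the trivial solution $a=b=c=1$ genuinely corresponds to this CM form. Instead I would use the CM structure. The isomorphism forces the image of $\rhobar_{E,p}$ to lie in the normalizer of a Cartan subgroup: split if $p\equiv1\pmod4$, nonsplit if $p\equiv3\pmod4$. Thus $E$ gives a rational point on $X_{\mathrm{split}}(p)$ or $X_{\mathrm{nonsplit}}(p)$.

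In the split case I would invoke Momose's theorem, or the argument of Darmon–Merel, which extends Mazur's formal immersion method. This shows that such a rational point has potentially good reduction at all primes $\ell\ne2$. That contradicts the multiplicative reduction of $E$ at any odd prime dividing $abc$, so $abc=\pm1$. The nonsplit case is harder, since Mazur's method does not apply directly. Here I would follow Darmon–Merel: use the fact that $E$ has a rational $2$-torsion point together with Ribet's theorem to work in a winding quotient of $J_0(2p^2)$, or an analogous Jacobian, with finite Mordell–Weil group. Running the formal immersion argument at primes of multiplicative reduction then again forces $abc=\pm1$.
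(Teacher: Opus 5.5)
Your proposal matches the paper. The paper gives no proof of this statement: it quotes the Main Theorem of Darmon--Merel and uses it only as a black box in Corollary~\ref{DM-Corollary}, which is what your first sentence proposes. The rest of your proposal is an outline of the literature proof, not a competing route, and it is broadly faithful. It covers the classical cases $n=4$ (your reduction to $X^4-Y^4=Z^2$ is correct) and $n=3$. For $p\ge5$ it has the right shape: the Frey curve, level lowering to the CM newform of level $32$, and image in a Cartan normalizer (split for $p\equiv1\pmod 4$, nonsplit for $p\equiv 3\pmod 4$). The split case then goes through Momose/Ribet and the nonsplit case through Darmon--Merel's winding quotients.

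If you keep the sketch, several points need repair.

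(i) You misread \eqref{kraus-N}. In the paper's convention the even term sits in the $B$-slot, and the parity condition refers to the variable attached to that term. Here that variable is $c$, not your $b$, which is always odd.
\begin{itemize}
\item If $c$ is even, then $v_2(2c^p)\ge p+1$. With the Frey curve normalized as in the paper, the Serre level is $2$. With your normalization $a\equiv-1\pmod 4$ you get its $-1$-twist, of Serre level $16$. In both cases $S_2(\Gamma_0(N))=0$, so this case ends at once.
\item Only for $c$ odd is the level $32$. Your last step needs this: potentially good reduction at all odd primes, against multiplicative reduction at odd primes dividing $abc$, gives $abc=\pm1$ only once $abc$ is known to be odd.
\end{itemize}

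(ii) Level lowering requires $\rhobar_{E,p}$ to be irreducible. For $p\ge5$ this follows from the classification of rational cyclic isogenies (Mazur, Kenku), because $E$ has full rational $2$-torsion. It should be stated.

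(iii) Momose's theorem applies only to $p$ outside a finite set. For example, it cannot cover $p=5$, since $X_{\mathrm{split}}(5)$ has genus $0$. The small exponents therefore need a separate argument; classically, D\'enes treated $n\le 31$.

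(iv) The nonsplit case, which is exactly the new content of Darmon--Merel, is only named in your sketch. You assert a rank-zero winding quotient and a working formal immersion argument without saying why either exists. That part therefore remains a citation, which is fine, since the paper itself does no more.
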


\begin{corollary}
    \label{DM-Corollary}
    Suppose $(x,y,p)$ is a non-trivial solution to \eqref{main-equation}. By Proposition \ref{descent}, $\exists\alpha_i,d_i\in\N$ and $u_i\in\N$ such that
    \begin{align*}
        \alpha_1D_1\frac{u_1^n}{d_1}-\alpha_0D_0\frac{u_0^n}{d_0} &= 1 \\
        \alpha_2D_2\frac{u_2^n}{d_2}-2\alpha_0D_0\frac{u_0^n}{d_0} &= 1 \\
        2\alpha_1D_1\frac{u_1^n}{d_1} - \alpha_2D_2\frac{u_2^n}{d_2} &= 1 \\
        \frac{T_k(x)}{D_0D_1D_2} &= \alpha_3\frac{u_3^n}{d_3}
    \end{align*}
    and 
    \[
        x=\alpha_0D_0\frac{u_0^n}{d_0}, \quad
        x+1=\alpha_1D_1\frac{u_1^n}{d_1}, \quad
        2x+1=\alpha_2D_2\frac{u_2^n}{d_2}.
    \]
    Then we do not have any of the following:
    \begin{enumerate}
        \item $\frac{\alpha_1D_1}{d_1},\frac{\alpha_0D_0}{d_0}\in\N$ and $\frac{\alpha_1D_1}{d_1}\frac{\alpha_0D_0}{d_0}=2$,
        \item $\frac{\alpha_2D_2}{d_2},\frac{\alpha_0D_0}{d_0}\in\N$ and $\frac{\alpha_2D_2}{d_2}\frac{\alpha_0D_0}{d_0}=1$, or
        \item $\frac{\alpha_2D_2}{d_2},\frac{\alpha_1D_1}{d_1}\in\N$ and $\frac{\alpha_2D_2}{d_2}\frac{\alpha_1D_1}{d_1}=1$.
    \end{enumerate}
\end{corollary}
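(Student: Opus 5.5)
Assume for contradiction that one of the three conditions holds. In each case we turn the corresponding equation among \eqref{14}--\eqref{16} into an instance of $X^n+Y^n=2Z^n$ with a primitive solution and $XYZ\neq 0,\pm1$, which Theorem~\ref{DM} forbids. We work with a prime exponent $n=p\ge3$, which suffices as noted at the start of this section; $n=2$ would be handled separately and is not the concern here.

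\emph{Case (1).} Put $a=\frac{\alpha_1D_1}{d_1}$ and $b=\frac{\alpha_0D_0}{d_0}$. These are positive integers with $ab=2$, so $\{a,b\}=\{1,2\}$. Equation \eqref{14} reads $au_1^n-bu_0^n=1$.
\begin{itemize}
\item If $a=1,b=2$, then $u_1^n+(-1)^n=2u_0^n$, so $(X,Y,Z)=(u_1,-1,u_0)$ solves $X^n+Y^n=2Z^n$.
\item If $a=2,b=1$, then $u_0^n+1^n=2u_1^n$, so $(X,Y,Z)=(u_0,1,u_1)$.
\end{itemize}
Either triple is primitive because one coordinate is $\pm1$. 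Its product is $\pm u_0u_1$, and this lies in $\{0,\pm1\}$ only if $u_0=u_1=1$. That forces $x=bu_0^n=b\le 2$, i.e.\ $x=1$ (the trivial solution) or $x=2$. Since $S_k(2)=1+2^k$ is not a perfect power for even $k$ by Catalan--Mihăilescu, or by a direct check for the range of $k$ here, both are excluded by the hypothesis $x\ge2$ of a non-trivial solution.

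\emph{Cases (2) and (3).} Put $c=\frac{\alpha_2D_2}{d_2}$. In case (2), $bc=1$ with both positive integers, so $b=c=1$ and \eqref{15} reads $u_2^n-2u_0^n=1$. Then $(X,Y,Z)=(u_2,-1,u_0)$ solves $X^n+Y^n=2Z^n$, using $n$ odd. In case (3), $a=c=1$ and \eqref{16} reads $2u_1^n-u_2^n=1$, giving $(X,Y,Z)=(u_2,1,u_1)$. Again the triple is primitive, and Theorem~\ref{DM} forces $u_0=u_2=1$ in case (2) or $u_1=u_2=1$ in case (3).
\begin{itemize}
\item In case (2) this gives $2x+1=1$, so $x=0$.
\item In case (3) this gives $x+1=1$, so $x=0$.
\end{itemize}
Both contradict $x\in\N$ being non-trivial.

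The main point needing care is reducing the degenerate case $XYZ\in\{0,\pm1\}$ back to trivial or small $x$, together with the parity of $n$. Using the sign $(-1)^n=-1$ requires $n$ odd, so I would state explicitly that we may take $n=p$ an odd prime. The case $n=2$ (or more generally $4\mid n$) is covered by the prior result of Jacobson--Pint\'er--Walsh cited in the introduction.
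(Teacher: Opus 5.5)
Your proof is correct and follows the paper's route. In each case you rewrite the relevant equation among \eqref{14}--\eqref{16} as $X^n+Y^n=2Z^n$ with one coordinate equal to $\pm1$, and Theorem~\ref{DM} forces $x\in\{0,1\}$ or no solution, while you make explicit the positivity of the $u_i$ and the oddness of $n$ that the paper's one-line argument leaves implicit. One small simplification: in the subcase $a=1$, $b=2$ you do not need the appeal to Catalan's theorem, since $(u_0,u_1)=(1,1)$ does not satisfy $u_1^n-2u_0^n=1$ at all.
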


\begin{proof}
    Suppose $\frac{\alpha_1D_1}{d_1},\frac{\alpha_0D_0}{d_0}\in\Z$ and $\frac{\alpha_0\alpha_1D_0D_1}{d_0d_1}=2$. Then the first equation leads to either
    \[
        2u_1^n-u_0^n=1,\quad\text{or}\quad u_1^n-2u_0^n=1
    \]
    By Theorem \ref{DM}, the only solutions are $(u_0,u_1)=(1,1)$ and $(-1,-1)$ respectively. In the former case, we have $x=1$, which is a trivial solution. In the latter case, we have $x=-2$, which is not a valid solution. The remaining two cases follow similarly.
\end{proof}

\section{Local constraints}

To resolve a specific prime exponent $p$ for a generalized Fermat equation, i.e. show there are no non-trivial primitive solutions $(a,b,c)$ for that specific prime exponent $p$, we look for a local obstruction at a prime $\ell\equiv1\mod p$. Indeed, being a $p$th power mod $\ell$ is a very restrictive constraint. For example, if $\ell=2p+1$, then the only $p$th powers modulo $\ell$ are $\pm1$. In general, if $\ell=tp+1$, then there are only $t$ possible $p$th powers.

\begin{lemma}
    \label{mod-l}
    Let $(x,y,p)$ be a solution to \eqref{main}. Then for any prime $\ell$, there exists a solution to the system \eqref{14}--\eqref{16} modulo $\ell$.
\end{lemma}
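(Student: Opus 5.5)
The plan is to obtain the statement directly from Proposition~\ref{descent} by reducing an integral solution modulo $\ell$. Let $(x,y,p)$ be a solution to \eqref{main}. Proposition~\ref{descent} provides pairwise coprime $\alpha_i,d_j\in\N$ and $u_0,u_1,u_2\in\N$ such that
\[
f_i(x)=\alpha_iD_i\frac{u_i^p}{d_i}, \qquad i=0,1,2,
\]
and such that \eqref{14}--\eqref{16} hold over $\Q$. The one subtlety is that the coefficients $\alpha_iD_i/d_i$ are rational, so ``a solution modulo $\ell$'' needs a precise meaning.

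\textbf{Case $\ell\nmid d_0d_1d_2$.} Here every coefficient $\alpha_iD_i/d_i$ is an $\ell$-adic unit or integer, and the system reduces to a congruence system in $\Z/\ell\Z$. Reduce both sides of \eqref{14}--\eqref{16} modulo $\ell$, which is legitimate since each side lies in $\Z_{(\ell)}$. Then $(w_0,w_1,w_2)=(\bar u_0,\bar u_1,\bar u_2)\in(\Z/\ell\Z)^3$ satisfies the reduced system.

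\textbf{Case $\ell\mid d_i$ for some $i$.} The coefficient $\alpha_iD_i/d_i$ is not $\ell$-integral, but the term $\alpha_iD_iu_i^p/d_i=f_i(x)$ is still an integer. I would interpret the congruence system as being in the unknowns $X_i=\alpha_iD_iu_i^p/d_i$, subject to the constraint that $X_i d_i/(\alpha_iD_i)$ is a $p$-th power. Equivalently, I would clear denominators, multiplying each equation by $d_0d_1d_2$, and reduce the resulting integral system $\alpha_1D_1d_0d_2u_1^p-\alpha_0D_0d_1d_2u_0^p=d_0d_1d_2$, and so on. The integers $u_i$ still solve this system modulo $\ell$.

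The pairwise coprimality in Proposition~\ref{descent} guarantees that at most one $d_i$ is divisible by $\ell$. This keeps the cleared system nondegenerate in the other two variables, which is what makes the check useful: solvability is then a genuine constraint.

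The main obstacle is not the argument itself but choosing the exact formulation of ``modulo $\ell$'' so that it matches how the lemma is used later. There the lemma is applied with $\ell\equiv1\pmod p$ to derive contradictions. I would state the conclusion in the cleared-denominator form, which covers all $\ell$ uniformly. I would also remark that for $\ell\nmid d_0d_1d_2$ this form coincides with the naive reduction of \eqref{14}--\eqref{16}. In either form, the conclusion follows immediately from reducing the identities of Proposition~\ref{descent}.
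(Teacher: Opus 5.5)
Your core argument is exactly the paper's one-line proof: reduce the integral identities supplied by Proposition~\ref{descent} modulo $\ell$. Your case $\ell\nmid d_0d_1d_2$ is correct as written. The error is in the case $\ell\mid d_i$, where you claim the cleared system stays nondegenerate, so that solvability modulo $\ell$ is a genuine constraint. That claim is false.

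Take $\ell\mid d_0$ and multiply every equation by $d_0d_1d_2$. Every coefficient of every equation then contains the factor $d_0$, except the coefficients of $U_0^p$, which contain $D_0$. Moreover $\ell\mid D_0$, because $d_0\mid D_0^2/\gcd(D_0^2,C_k)$. So all three cleared congruences read $0\equiv 0\pmod{\ell}$, and every triple in $(\Z/\ell\Z)^3$ solves them. The lemma in your chosen form is true, but it is empty exactly at the primes where you said the formulation matters. In particular, \eqref{16} does not involve $d_0$ and should never have been multiplied by it. Also, your formulation in the unknowns $X_i=f_i(x)$ is not actually equivalent to the cleared-denominator one, despite your ``Equivalently''.

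The useful information at $\ell\mid d_i$ comes from a different observation, which is how the paper exploits such primes in Lemma~\ref{local-kraus}. By construction, $d_i$ is supported on primes dividing $f_i(x)/D_i$, so $\ell\mid d_i$ forces $f_i(x)\equiv 0\pmod{\ell}$. The remaining coefficients $\alpha_jD_j/d_j$ with $j\neq i$ are $\ell$-integral, since the $d_j$ are pairwise coprime. For $\ell\mid d_0$, for instance, the system becomes $\alpha_1D_1u_1^p/d_1\equiv 1$ and $\alpha_2D_2u_2^p/d_2\equiv 1\pmod{\ell}$, and \eqref{16} then holds automatically. This is a real constraint on $(u_1,u_2)$. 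State the lemma in that form if you want it to be usable in the later local arguments.
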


This follows immediately from Proposition \ref{descent}. Even if one of the equations from the descent has a $p$th power solution mod $\ell$, for some $\ell\equiv1\mod p$, it is very unlikely that such a solution satisfies the entire system.

\begin{lemma}
    \label{local-kraus}
    Let $p$ and $\ell$ be primes such that $\ell=pt+1$ for some $t\in\N$. Let $A,B\in\Z_{(\ell)}$ such that $|v_\ell(A)|,|v_\ell(B)|<p$ and suppose $(x,y,p)$ is a solution to 
    \[
        Ax^p - By^p = 1
    \]
    such that $x,y\in\Z$ and $Ax^p,By^p\in\Z$. If $v_\ell(B)\neq0$, then $A^{-t}\equiv1\mod\ell$.
\end{lemma}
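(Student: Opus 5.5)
The plan is to reduce everything modulo $\ell$ and use that the multiplicative group $(\Z/\ell\Z)^\times$ is cyclic of order $\ell-1=pt$. In this group a nonzero $p$th power $w^p$ satisfies $(w^p)^t=w^{\ell-1}=1$. The goal is therefore to show that, when $v_\ell(B)\neq0$, reducing the equation $Ax^p-By^p=1$ modulo $\ell$ forces $Ax^p\equiv1\pmod\ell$, with $x\not\equiv0$, and then to raise to the $t$th power.

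First I would show $v_\ell(Ax^p)=0$. Suppose $v_\ell(B)>0$. Since $y\in\Z$, we get $v_\ell(By^p)\ge v_\ell(B)>0$. The identity $Ax^p=1+By^p$ then gives $v_\ell(Ax^p)=0$. Now suppose $v_\ell(B)<0$. Since $By^p\in\Z$, we need $v_\ell(B)+p\,v_\ell(y)\ge0$, and $|v_\ell(B)|<p$ forces $v_\ell(y)\ge1$. Then $v_\ell(By^p)=v_\ell(B)+p\,v_\ell(y)$. This is a positive number that is not a multiple of $p$, hence nonzero, so $v_\ell(By^p)\ge1$. In both cases $By^p\equiv0\pmod\ell$, and so $Ax^p\equiv1\pmod\ell$.

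Next I would pin down $v_\ell(A)$ and $v_\ell(x)$. We have $v_\ell(A)+p\,v_\ell(x)=0$, and by hypothesis $|v_\ell(A)|<p$. Together these force $v_\ell(A)=0$ and $v_\ell(x)=0$. So $A$ and $x$ are $\ell$-adic units, and they reduce to elements $\bar A,\bar x$ of $\F_\ell^\times$ with $\bar A\,\bar x^{p}=1$. This step, checking that all valuations are compatible so that the reduction makes sense, is the only point needing care. The hypotheses $A,B\in\Z_{(\ell)}$ and $|v_\ell(\cdot)|<p$ are designed exactly for it.

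Finally, $\bar A=\bar x^{-p}$, so $\bar A^{-t}=\bar x^{pt}=\bar x^{\ell-1}=1$ by Fermat's little theorem. This gives $A^{-t}\equiv1\pmod\ell$.
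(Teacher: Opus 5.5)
Your proof is correct and follows the same route as the paper. Like the paper, you show $v_\ell(By^p)>0$ in both cases $v_\ell(B)>0$ and $v_\ell(B)<0$, deduce $Ax^p\equiv1\pmod\ell$, and then raise $A^{-1}\equiv x^p$ to the $t$th power. Your extra step using $|v_\ell(A)|<p$ to force $v_\ell(A)=v_\ell(x)=0$ makes explicit the unit condition that the paper uses implicitly when it writes $A^{-1}\equiv x^p\pmod\ell$.
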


\begin{proof}
    Suppose $v_\ell(B)<0$. Since $By^p\in\Z$, we must have that $v_\ell(y)\ge1$. Then
    \[
       v_\ell(By^p)=v_\ell(B)+pv_\ell(y)>-p + p=0.
    \]
    If $v_\ell(B)>0$, then we immediately have $v_\ell(By^p)>0$. Thus,
    \[
        1=Ax^p-By^p\equiv Ax^p\mod\ell .
    \]
    Then $A\inv\equiv x^p\mod\ell $ and hence $A^{-t}\equiv 1\mod\ell$.
\end{proof}

Note that if we just consider the system \eqref{14}--\eqref{16}, we lose the information from \eqref{17}. 

\begin{lemma}
    \label{local-kraus-with-Tk}
    Suppose $(x,y,p)$ is a solution to \eqref{main-equation}. Then
    \[
        T_k(x) = \frac{C_k}{\alpha_0\alpha_1\alpha_2}\frac{d_0d_1d_2}{D_0D_1D_2} \cdot v^p, \quad \mbox{ for some } v \in \Z
    \]
    where $\alpha_i,D_i,d_i$, $0\le i\le 2$ are as in Proposition \ref{descent}.
\end{lemma}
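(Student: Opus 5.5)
Starting from \eqref{main-equation}, I solve for $T_k(x)$ and substitute the descent parametrizations of Proposition \ref{descent} for $x$, $x+1$ and $2x+1$. By \eqref{main-equation},
\[
  T_k(x) = \frac{C_k\, y^p}{x(x+1)(2x+1)}.
\]
Proposition \ref{descent} gives
\[
  x=\alpha_0D_0\frac{u_0^p}{d_0},\quad x+1=\alpha_1D_1\frac{u_1^p}{d_1},\quad 2x+1=\alpha_2D_2\frac{u_2^p}{d_2},
\]
with $u_0,u_1,u_2\in\N$. Substituting these,
\[
  T_k(x) = \frac{C_k}{\alpha_0\alpha_1\alpha_2}\,\frac{d_0d_1d_2}{D_0D_1D_2}\,\left(\frac{y}{u_0u_1u_2}\right)^p .
\]
So it suffices to show that $v:=y/(u_0u_1u_2)$ is an integer.

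For integrality, I would use the factorization $y=y_0y_1y_2y_3$ from Lemma \ref{coprime-rational-split}, which the proof of Proposition \ref{descent} applies to \eqref{main-equation-divided}. The $u_i$ in that proof are exactly these $y_i$, since $f_i(x)/D_i=\alpha_i u_i^p/d_i$ is obtained from $d_ia_i=c_iy_i^p$. Hence $u_0u_1u_2\mid y$ and $v=u_3\in\Z$.

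As a consistency check, the coefficient can also be read off from \eqref{17}: $T_k(x)=D_0D_1D_2\,\alpha_3 u_3^p/d_3$. Comparing with the relations $\alpha_0\alpha_1\alpha_2\alpha_3 = C_k/\gcd(D_0^2D_1^2D_2^2,C_k)$ and $d_{-1}d_0d_1d_2d_3 = D_0^2D_1^2D_2^2/\gcd(D_0^2D_1^2D_2^2,C_k)$ gives
\[
  \frac{C_k}{\alpha_0\alpha_1\alpha_2}\frac{d_0d_1d_2}{D_0D_1D_2}=D_0D_1D_2\frac{\alpha_3}{d_3}\cdot\frac{1}{d_{-1}}\cdot(\text{unit}).
\]
This exposes an extra factor $d_{-1}$.

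The main obstacle is reconciling this factor $d_{-1}$. Lemma \ref{coprime-rational-split} as stated yields $d_{-1}=c_{-1}y_{-1}^p$ with $c_{-1}=1$, so $d_{-1}$ is itself a $p$-th power that can be absorbed into $v^p$. Alternatively, I would argue, as in the proof of Proposition \ref{descent}, that $d_{-1}=1$ when $p\ge3$. The cleanest route, however, is the direct computation in the first paragraph. There the only thing to verify is $u_0u_1u_2\mid y$, and that follows from the explicit construction $y=y_0\cdots y_k$ with $u_i=y_i$, so no valuation bookkeeping is needed.
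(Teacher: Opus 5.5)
Your first paragraph is the paper's proof: the paper makes the same substitution and simply sets $v=y/(u_0u_1u_2)$ without checking integrality, so your integrality argument is a worthwhile addition. One correction to it: the proof of Lemma~\ref{coprime-rational-split} actually produces $y=y_{-1}y_0y_1y_2y_3$ with $d_{-1}=y_{-1}^p$ (the lemma's statement drops $y_{-1}$), so $u_0u_1u_2\mid y$ still holds but $v=y_{-1}u_3$ rather than $u_3$, which is exactly what your $d_{-1}$ consistency check detects; also, your alternative claim that $d_{-1}=1$ for $p\ge3$ is only shown in Proposition~\ref{descent} for primes satisfying \eqref{delta0}--\eqref{delta2}, so do not rely on it.
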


\begin{proof}
    By Proposition \ref{descent}, there exists $\alpha_0,...,\alpha_3,d_{-1},...,d_3\in\N$ such that 
    \[
        \alpha_0\alpha_1\alpha_2\alpha_3=\frac{C_k}{\gcd(D_0^2D_1^2D_2^2,C_k)},\quad d_{-1}d_0d_1d_2d_3=\frac{D_0^2D_1^2D_2^2}{\gcd(D_0^2D_1^2D_2^2,C_k)},
    \]
    and 
    \[
        x=\alpha_0D_0\frac{u_0^p}{d_0},\quad x+1=\alpha_1D_1\frac{u_1^p}{d_1},\quad 2x+1=\alpha_2D_2\frac{u_2^p}{d_2},\quad\text{and}\quad T_k(x)=D_0D_1D_2\alpha_3\frac{u_3^p}{d_3}.
    \]
    Thus,
    \begin{align*}
        y^p &= \frac{1}{C_k}x(x+1)(2x+1)T_k(x) \\
        &= \frac{1}{C_k}\alpha_0D_0\frac{u_0^p}{d_0}\cdot \alpha_1D_1\frac{u_1^p}{d_1}\cdot\alpha_2D_2\frac{u_2^p}{d_2}\cdot T_k(x)\\
        &= \frac{\alpha_0\alpha_1\alpha_2}{C_k}\frac{D_0D_1D_2}{d_0d_1d_2}(u_0u_1u_2)^pT_k(x)
    \end{align*}
   and hence
    \[
    T_k(x)= \frac{C_k}{\alpha_0\alpha_1\alpha_2}\frac{d_0d_1d_2}{D_0D_1D_2}\left(\frac{y}{u_0u_1u_2}\right)^p.
    \]
    If we let $v=\frac{y}{u_0u_1u_2}$, then we have the result.
\end{proof}

Note that if we solve the system \eqref{14}--\eqref{16} modulo a prime $\ell$, then there is a small finite list of possible values of $x$ modulo $\ell$, and thus a finite set of values for $T_k(x)$ modulo $\ell$. We can compare this list to the set of values 
\[
    \left \{\frac{C_k}{\alpha_0\alpha_1\alpha_2}\frac{d_0d_1d_2}{D_0D_1D_2} \cdot v^p:v\in\F_\ell\right\},
\]
and if there are no matching values, then we have found an obstruction.

It is sometimes the case that the system \eqref{14}--\eqref{17} is consistent modulo $\ell$ for every $\ell\equiv1\mod p$ in a given range. In this case, we will try to view the system modulo $p^2$ instead. 

\begin{remark}
    \label{p-squared}
    Let $p$ be a prime and let $A,B\in\Q^\times$ such that $|v_\ell(A)|,|v_\ell(B)|<p$. Suppose $(x,y,p)$ is a solution to 
    \[
        Ax^p-By^p = 1
    \]
    such that $x,y\in\Z$ and $Ax^p,By^p\in\Z$. If $v_\ell(A)<0$, then $\left(\frac{x}{p},y,p\right)$ is an integral solution to
    \[
        Ap^pX^p - BY^p=1.
    \]
    Indeed, since $Ax^p\in\Z$, we must have that $v_\ell(x)>0$ so that $v_\ell\left(\frac{x}{p}\right)\ge0$ and 
    \[
        Ap^p\left(\frac{x}{p}\right)^p-By^p=Ax^p-By^p=1.
    \]
\end{remark}

In particular, when we want to view the system \eqref{14}--\eqref{16} modulo $p^2$ in order to show that there are no solutions over $\Z$, we will instead show that this `rationalized' equation has no solution modulo $p^2$.

\section{Linear forms in logarithms}

We will make use of two powerful theorems to deal with most cases not handled by the modular and local methods.

\begin{theorem}
    \label{bennett}
    \cite{bennett} If $A,B,n\in\Z$, $AB\neq0$, $n\ge3$, then 
    \[
        |Aa^n-Bb^n|=1
    \]
    has at most one solution in positive integers $(a,b)$.
\end{theorem}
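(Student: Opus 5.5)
Since Theorem~\ref{bennett} is quoted from \cite{bennett}, we only indicate how we would reprove it; the plan is a Thue--Siegel argument combined with a gap principle and linear forms in logarithms. Suppose, for contradiction, that $(a_1,b_1)$ and $(a_2,b_2)$ are two distinct positive solutions with $b_1<b_2$. We may assume $\gcd(A,B)=1$, $A,B>0$ (otherwise $|Aa^n-Bb^n|$ grows and there are no or trivially controlled solutions), and, replacing $n$ by a divisor, that $n$ is an odd prime or $n=4$.

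First, we would convert the equation into Diophantine approximation. We set $\theta=(B/A)^{1/n}$. Then
\[
  \left|\theta-\frac{a}{b}\right|\asymp \frac{1}{n A\, b^{n}\,\theta^{n-1}}.
\]
This shows that every solution produces an exceptionally good rational approximation to $\theta$. Writing $z=Bb^n/(Aa^n)$, the two solutions also give $a_i^n\,(1\pm z_i)=\ldots$ type identities. From these we derive a gap principle. The standard argument uses that $a_1b_2-a_2b_1\neq 0$ together with the approximation inequality, and should give
\[
  b_2\gg_n \,(A\,b_1^{\,n-2})^{c}\, b_1
\]
for an explicit constant $c$. In other words, a second solution must be enormously larger than the first.

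Second, we would obtain an effective irrationality measure for $\theta$ that contradicts this gap. We would use the hypergeometric (Thue--Siegel) method: take the explicit Pad\'e approximants to $(1-x)^{1/n}$ at the point $x=1-Bb_1^n/(Aa_1^n)$, which is tiny because $(a_1,b_1)$ is a solution. From them we build a family of rational approximations $p_r/q_r$ to $\theta$ with controlled height and nonvanishing determinant. Comparing these with $a_2/b_2$ gives an upper bound of the form $b_2\ll b_1^{\lambda(n)}$ with $\lambda(n)<n-1$ once $A b_1^n$ is large relative to an explicit function of $n$. The key inputs are the sharp arithmetic estimates for the denominators of the hypergeometric coefficients (Chudnovsky-type bounds on $\prod p^{v_p}$ of binomial coefficients $\binom{r+1/n}{k}$). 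Combining this upper bound with the gap principle yields a contradiction outside a small explicit region in $(n,A,b_1)$.

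Third, we would treat the remaining region. For large $n$ we would apply Laurent's two-logarithm lower bound to
\[
  \Lambda = n\log(a_2/b_2)-\log(B/A),
\]
which the second solution forces to be exponentially small. This bounds $n$ absolutely. For the finitely many remaining small $n$ with small $A,B$, we would finish by continued-fraction or Thue-solver computations, and for $n=3$ we would invoke the Delone--Nagell/Ljunggren theory of binary cubic forms. The main obstacle will be the second step: making the Pad\'e approximation estimates uniform in $n$ and sharp enough that the exceptional region left to the other methods is genuinely finite and computable. If the uniform version fails, I would first try restricting to $n\ge 5$ prime and sharpening the denominator estimates prime by prime.
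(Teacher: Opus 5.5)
The paper gives no proof of this statement; it quotes \cite{bennett}. Your outline draws on the right ideas, but as written it is essentially the earlier Bennett--de Weger argument, and its final step does not close. The hard residue is the family $A=B+1$ with first solution $(a_1,b_1)=(1,1)$ and $A$ small compared with $n$. This is exactly the case in which the present paper needs the theorem. In that case your Pad\'e point is $x=\pm1/(Aa_1^n)=\pm1/A$. For prime $n$, the coefficients $\binom{1/n}{k}$ (and their shifts) that make up the approximants have $n$-adic valuation exactly $-k-v_n(k!)$. So after clearing denominators, the $r$-th approximant has height of order $(4nA)^r$ and remainder of order $(n/4A)^r$, up to factors independent of $A$. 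Once $A$ falls below a constant multiple of $n$, the remainders do not tend to $0$ and you get no irrationality measure at all. No prime-by-prime sharpening of Chudnovsky-type denominator estimates removes this forced $n$-adic loss. The gap principle is also empty here: with $b_1=1$ it only gives $b_2\gg nA$.

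Laurent's two-logarithm bound then bounds $n$, but not the solution. Since $a_2/b_2$ is one of the two algebraic numbers in $\Lambda$, its height $\approx\log b_2$ cancels against $n\log b_2$, and you get only $n/(\log n)^2\ll\log A$. What remains is a finite set of Thue equations $(B+1)x^n-By^n=\pm1$ with $n$ up to a few hundred. In the notation $|ax^n-by^n|=1$, to the best of my recollection Bennett and de Weger were left with $b=a+1$, $2\le a\le\min\{0.3n,83\}$, $17\le n\le 347$. This set is finite but not computable with the tools you list:
\begin{itemize}
\item a Thue solver needs the unit group of a number field of degree $n$;
\item continued fractions need an a priori bound on $b_2$, which nothing in your plan supplies;
\item neither local obstructions nor the modular method can help, because $(1,1)$ is a genuine solution.
\end{itemize}
The missing ingredient is what \cite{bennett} contributes: an explicit multi-dimensional form of the hypergeometric method (simultaneous Pad\'e approximation to several binomial functions, after Mahler and Chudnovsky). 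It gives effective rational-approximation bounds for the small-height numbers $(B/(B+1))^{1/n}$ even when $B$ is small relative to $n$. Your preliminary reductions are fine: $A,B>0$ coprime, $n$ an odd prime or $4$, and $n=3$ via Delone--Nagell.
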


In particular, if $A=B+1$, then the only solution is $(a,b)=(1,1)$. 

We will more generally consider $A, B\in\Q$ and we would like to get an upper bound 
of the exponent $n$. Suppose that $A, B>0$ and $A\neq \pm B+1$. If $n$ is even, then
we can always find a positive solution $a, b$. If $n$ is odd and $a$ and $b$ are 
of opposite sides, since $A\neq \pm B+1$, so either $|a| \ge 2$ or $|b| \ge 2$, say
$|b|\ge 2$. It follows that
\[
1= |Aa^n -Bb^n| = A|a|^n +B|b|^b \ge B2^n
\]
and hence we have $n \le -\frac{\log B}{\log 2}.$
%
%
Thus, without loss of generality, 
For the remainder of this section, we will assume that $a,b>0$.

Below is a consequence of the linear forms in logarithms of Lemma 4.2 \cite{BGP}. We note further that in our application, we may assume without loss of generality that $a$ and $b$ are of the same sign from the descent step.

\begin{theorem}
    \label{log-bound-general}
    Let $A=\frac{p_1}{q_1}$ and $B=\frac{p_2}{q_2}\in\Q$  be positive rational numbers such that $p_1,p_2,q_1,q_2\in\N$ are pairwise coprime and $n \in \N$ with $n \ge 2$. Suppose there exist integers $a,b$ with $a\ge1$ and $b\ge2$ and 
    \begin{equation*}
        |Aa^n-Bb^n|= 1.
    \end{equation*}
    Let $\rho\in\R$ with $\rho >1$, $n_0=\rho^5$, $\lambda=\log\rho$, $h=\log n$. Let
    \begin{equation*}
        b_1=1, \quad b_2=n, \quad \alpha_1=\frac{A}{B}, \quad \alpha_2=\frac{b}{a},
    \end{equation*}
    \begin{equation*}
        a_1=(\rho-1)|\log\alpha_1|+2h(\alpha_1),\quad a_2=A_2\max\{\log b,1\},
    \end{equation*}
    where $h(x/y) =\max\{ \log |x|, \log |y| \}$ for coprime non-zero integer $x$ and $y$, and
    \begin{equation*}
        A_2=\frac{\rho-1}{n_0}\left(\frac{2}{B}+\left|\log\left(\frac{A}{B}\right)\right|\right) + 2\max\left\{\frac{1}{n_0}\log\left(\frac{1}{A}\left(B+1\right)\right)+1,1\right\}.
    \end{equation*}
    We also define
    \begin{equation*}
        R_1=\frac{4}{\lambda}+\frac{4}{\log n_0}+\frac{\lambda}{\log^2 n_0},\quad S_1 =\frac{1}{\log n_0}+\frac{1}{\lambda},
    \end{equation*}
    and
    \begin{align*}
        B_1 &= \frac{a_1A_2R_1^2}{9\lambda}, \\
        B_2 &= \frac{2R_1a_1}{3\log n_0}+\frac{2A_2R_1}{3\log n_0}, \\
        B_3 &= \frac{16\sqrt{2}}{3}\sqrt{a_1A_2}S_1^{3/2}(\log n_0)^{-1/2}, \\
        B_4 &= \left(\log\left(\frac{a_1A_2S_1^2}{\lambda}\right) + \frac{3}{2}\lambda + \frac{3}{20}\right)\frac{1}{A_3} + \frac{\log A_3}{A_3}+\frac{2}{\log n_0},
    \end{align*}
    where $A_3=\max\{\log b,1\}\log^2n_0$. Further, suppose that
    \begin{equation*}
        a_1+a_2\ge4\max(1,\lambda)\quad\text{and}\quad\frac{1}{a_1}+\frac{1}{a_2}\le\min(1,\lambda\inv) .
    \end{equation*}
    Let $n_1$ be the smallest positive integer such that
    \begin{equation*}
        C <\frac{n_1\log2}{\log^2 n_1},
    \end{equation*}
    where 
    \begin{equation*}
        C = \frac{\log\left(\frac{2}{B}\right)}{A_3}+(B_1+B_2+B_3+B_4)
    \end{equation*}
    and 
    \begin{equation*}
        n_2=\frac{1}{A_2\left(e^{-(\log\lambda+1.56)}-\frac{1}{a_1}\right)}.
    \end{equation*}
    Then $n<\max\{n_0,n_1,n_2\}$.
\end{theorem}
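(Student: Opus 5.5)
The plan is to reduce $|Aa^n-Bb^n|=1$ to a small linear form in two logarithms and then apply the Laurent-type lower bound packaged in \cite[Lemma 4.2]{BGP}. Put
\[
\Lambda=\log\frac{A}{B}-n\log\frac{b}{a}=b_1\log\alpha_1-b_2\log\alpha_2 .
\]
Since $Aa^n-Bb^n=\pm1$, we have $\frac{Aa^n}{Bb^n}=1\pm\frac{1}{Bb^n}$. With $b\ge2$, this gives $|\Lambda|\le \frac{2}{Bb^n}$ whenever $Bb^n\ge 2$. Otherwise $Bb^n<2$ and $n<\log(2/B)/\log 2$, a bound already absorbed into $n_1$ through the $\log(2/B)/A_3$ term in $C$. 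So the upper bound is
\[
\log|\Lambda|\le \log\frac{2}{B}-n\log b .
\]
We may also assume $\Lambda\neq0$. If $\Lambda=0$, then $Aa^n=Bb^n$, which contradicts $|Aa^n-Bb^n|=1$.

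For the lower bound, apply Laurent's theorem in the form of \cite[Lemma 4.2]{BGP}, with parameter $\rho$, $\lambda=\log\rho$, and $a_i\ge \max\{h(\alpha_i)\cdot 2+(\rho-1)|\log\alpha_i|,\lambda\}$. For $\alpha_1=A/B$ this is exactly the stated $a_1$. For $\alpha_2=b/a$ we have $h(\alpha_2)\le\log b$ up to a controlled correction, because $a<b$ essentially. Indeed $a\approx (B/A)^{1/n}b$, so $\log a\le\log b+\frac1n\log\frac{B+1}{A}$, and $|\log\alpha_2|\le\frac1n(|\log(A/B)|+2/B)$. If $n\ge n_0$, these combine to give $2h(\alpha_2)+(\rho-1)|\log\alpha_2|\le A_2\max\{\log b,1\}=a_2$, which explains the shape of $A_2$. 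The hypotheses $a_1+a_2\ge4\max(1,\lambda)$ and $1/a_1+1/a_2\le\min(1,\lambda^{-1})$ are precisely the admissibility conditions of the lemma.

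The lemma gives
\[
\log|\Lambda|\ge -C'\,a_1a_2\,\max\{\log b'+\cdots\}^2,
\]
where $b'=\frac{b_1}{a_2}+\frac{b_2}{a_1}$ and the terms $R_1,S_1$ enter through the explicit constants. I then compare this with $-n\log b+\log(2/B)$ and divide by $A_3=\max\{\log b,1\}\log^2n_0$. Splitting into the terms of $\log^2 b'$ and the cross terms and using $\log b'\le\log n+\ldots$ produces the inequality $n\log 2/\log^2 n\le C$ with $C=\log(2/B)/A_3+B_1+B_2+B_3+B_4$, whence $n<n_1$. The remaining alternative in Laurent's lemma, that the parameter $b'$ is small, i.e.\ $\log b'$ is below $\log\lambda+1.56$, translates into $n/a_1+\ldots$ being bounded. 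After solving for $n$ using $a_2=A_2\max\{\log b,1\}$, this yields $n<n_2$. Together with the case $n<n_0$, we get $n<\max\{n_0,n_1,n_2\}$.

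I expect the main obstacle to be the bookkeeping in the last step. The proof has to match exactly the explicit form of \cite[Lemma 4.2]{BGP}, namely its two-case conclusion and its constants, to the specific $B_1,\dots,B_4$, and it must verify monotonicity in $\log b$ so that replacing $\log b$ by $\max\{\log b,1\}$ is legitimate. I would first write down the BGP statement verbatim, substitute, and check each $B_i$ term by term.
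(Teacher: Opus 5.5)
Your skeleton is the paper's. You form $\Lambda=\log\frac{A}{B}-n\log\frac{b}{a}$ with $|\Lambda|<2/(Bb^n)$. You use the estimates $\log a\le\log b+\frac1n\log\frac{B+1}{A}$ and $|\log\frac ba|<\frac1n\bigl(\frac2B+|\log\frac AB|\bigr)$, which for $n\ge n_0$ make $a_2=A_2\max\{\log b,1\}$ admissible. A final comparison then gives $n<n_1$. Your explicit handling of $\Lambda\neq0$ and of $Bb^n<2$ covers points the written proof passes over.

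\textbf{The gap.} You have the wrong form of \cite[Lemma 4.2]{BGP}, and this breaks the step that produces the stated constants, starting with your derivation of $n_2$. As the paper uses it, the lemma is not a bound $-C'a_1a_2\max\{\log b'+\cdots\}^2$ with a ``$b'$ small'' alternative. It has a free real parameter $h$, requires $h\ge\log\bigl(\frac{b_1}{a_2}+\frac{b_2}{a_1}\bigr)+\log\lambda+1.56$, and concludes
\[
\log|\Lambda|\ge-\frac{a_1a_2}{9\lambda}R^2-\frac{2}{3}(a_1+a_2)R-\frac{16}{3}\sqrt{2a_1a_2}\,S^{3/2}-\log\Bigl(\frac{a_1a_2}{\lambda}S^2\Bigr)-\frac{3}{2}\lambda-2h-\frac{3}{20},
\]
with $R=\frac{4h}{\lambda}+4+\frac{\lambda}{h}$ and $S=1+\frac{h}{\lambda}$. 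The paper takes $h=\log n$. The role of $n_2$ is to guarantee the lemma's hypothesis: $n\ge n_2$ together with $a_2\ge A_2$ gives $\frac{n}{\lambda e^{1.56}}\ge\frac{1}{a_2}+\frac{n}{a_1}$.

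Your alternative ``$\log b'<\log\lambda+1.56$'' is the wrong condition. Solving $\frac{1}{a_2}+\frac{n}{a_1}<\lambda e^{1.56}$ gives $n<a_1(\lambda e^{1.56}-1/a_2)$, which is not $n_2$. The correct complementary case is $\log b'>\log n-\log\lambda-1.56$. When $e^{-(\log\lambda+1.56)}>1/a_1$, this rearranges to $n<n_2$.

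\textbf{The constants $B_1,\dots,B_4$.} For the same reason, these cannot be recovered by ``splitting $\log^2 b'$ and cross terms''. They come from substituting $h=\log n$ and $a_2=A_2\max\{\log b,1\}$ into the display above, then using $\log n\ge\log n_0=5\lambda$:
\begin{enumerate}
\item $R\le R_1\log n$ and $S\le S_1\log n$;
\item $\log n\le\frac{\log^2 n}{\log n_0}$, which is where the term $\frac{2}{\log n_0}$ in $B_4$ comes from, via the $-2h$ term;
\item $\log(\max\{\log b,1\}\log^2 n)\le\frac{\log A_3}{A_3}\max\{\log b,1\}\log^2 n$.
\end{enumerate}
With these, each term is at most $B_i\max\{\log b,1\}\log^2 n$. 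Combine this with $\log|\Lambda|<\log\frac{2}{B}-n\log b$, divide by $\max\{\log b,1\}\log^2 n$, and use $\log b/\max\{\log b,1\}\ge\log 2$. This gives $\frac{n\log 2}{\log^2 n}\le C$, contradicting $n\ge n_1$.

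Until you fix the lemma's form and the choice $h=\log n$, neither $n_2$ nor the specific $B_i$ are justified. Those constants are the actual content of the statement.
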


\begin{proof}
    Assume towards a contradiction that $n\ge\max\{n_0,n_1,n_2\}$. From the equation
    \[
        |Aa^n-Bb^n|= 1,
    \]
    we have
    \[
        \left|\left(\frac{A}{B}\right)\left(\frac{a}{b}\right)^n-1\right|=\frac{1}{Bb^n}.
    \]
    Using the inequality $|x|<2|e^x-1|$ when $x \ge -\ln 2$, we have
    \begin{equation}
        \left|\log\left(\frac{A}{B}\right) - n\log\left(\frac{b}{a}\right)\right|<2\left|\left(\frac{A}{B}\right)\left(\frac{a}{b}\right)^n-1\right|=\frac{2}{Bb^n}, \label{log.1}
    \end{equation}
    and so
    \begin{equation*}
        \left|\log\left(\frac{b}{a}\right)\right|<\frac{1}{n}\left(\frac{2}{Bb^n}+\left|\log\left(\frac{A}{B}\right)\right|\right) .
    \end{equation*}
    Thus
    \begin{equation*}
        (\rho-1)\left|\log\left(\frac{b}{a}\right)\right|<\frac{\rho-1}{n}\left(\frac{2}{Bb^n}+\left|\log\left(\frac{A}{B}\right)\right|\right)\le\frac{\rho-1}{n_0}\left(\frac{2}{B}+\left|\log\left(\frac{A}{B}\right)\right|\right).
    \end{equation*}
    Then we have
    \begin{align*}
        a^n=\frac{1}{A}(Bb^n\pm1) \le \frac{1}{A}\left(B+\frac{1}{b^n}\right)b^n \le \frac{1}{A}\left(B+1\right)b^n,
    \end{align*}
    so
    \begin{align}
        \log a\le\frac{1}{n}\log\left(\frac{1}{A}\left(B+1\right)b^n\right) &= \frac{1}{n}\log\left(\frac{1}{A}\left(B+1\right)\right)+\log b  \nonumber\\
        &\le\frac{1}{n_0}\log\left(\frac{1}{A}(B+1)\right)+\max\{\log b,1\}  \label{log.2}.
    \end{align}
    If $\log\left(\frac{1}{A}\left(B+1\right)\right)\le 0$, then $\log a\le \log b$. Otherwise, \eqref{log.2} gives
    \begin{align*}
      \log a  &\le\frac{1}{n_0}\log\left(\frac{1}{A}\left(B+1\right)\right)+\max\{\log b,1\} \\
        &\le \left(\frac{1}{n_0}\log\left(\frac{1}{A}\left(B+1\right)\right)+1\right)\max\{\log b,1\} .
    \end{align*}
    Then we have
    \[
        h(\alpha_2) = \max\{\log a,\log b\} 
        \le \max\left\{ \left(\frac{1}{n_0}\log\left(\frac{1}{A}\left(B+1\right)\right)+1\right),1\right\}\max\{\log b,1\}
    \]
    and
    \begin{align*}
        a_2 &= A_2\max\{\log b,1\} 
        = \frac{\rho-1}{n_0}\left(\frac{2}{B}+\left|\log\left(\frac{A}{B}\right)\right|\right)\max\{\log b,1\} \\
        &\quad+2\max\left\{\frac{1}{n_0}\log\left(\frac{1}{A}\left(B+1\right)\right)+1,1\right\}\max\{\log b,1\} \\
        &\ge \frac{\rho-1}{n_0}\left(\frac{2}{B}+\left|\log\left(\frac{A}{B}\right)\right|\right) + 2h(\alpha_2) > (\rho-1)\left|\log\left(\frac{b}{a}\right)\right|+2h(\alpha_2) .
    \end{align*}
    By definition of $a_1$, we have 
    \[
        a_1=(\rho-1)\left|\log\alpha_1\right|+2h(\alpha_1)=(\rho-1)\left|\log\left(\frac{A}{B}\right)\right|+2h(\alpha_1) .
    \]
    Next, by assumption $n\ge n_2$ so that
    \begin{align*}
        n &\ge \frac{1}{A_2\left(e^{-(\log\lambda+1.56)}-\frac{1}{a_1}\right)} \ge \frac{1}{A_2\max\{\log b,1\}\left(e^{-(\log\lambda+1.56)}-\frac{1}{a_1}\right)}.
    \end{align*}
    By rearranging the terms in the above inequality, we get
    \[
    \frac{n}{\lambda e^{1.56}} \ge \frac{1}{A_2\max\{\log b, 1\}}+\frac{n}{a_1}
    \]
    and hence, we have
    \begin{align*}
        h=\log n &\ge \log\left(\frac{1}{A_2\max\{\log b, 1\}}+\frac{n}{a_1}\right)+\log(\lambda e^{1.56}) \\
          &\ge  \log\left(\frac{b_1}{a_2}+\frac{b_2}{a_1}\right)+\log\lambda+1.56 .  
    \end{align*}
    Thus all the conditions of Lemma 4.2 in \cite{BGP} are satisfied and  we have
    \begin{equation}
        \log\left|\log\alpha_1-n\log\alpha_2\right|\ge-\frac{a_1a_2}{9\lambda}R^2 - \frac{2}{3}(a_1+a_2)R - \frac{16}{3}\sqrt{2a_1a_2}S^{3/2}-\log\left(\frac{a_1a_2}{\lambda}S^2\right)-\frac{3}{2}\lambda-2h-\frac{3}{20},  \label{log.3}
    \end{equation}
    where $R=\frac{4h}{\lambda}+4+\frac{\lambda}{h}$ and $S=1+\frac{h}{\lambda}$. We now estimate $R$ and $S$. We have
    \begin{equation}
        R=\frac{4h}{\lambda}+4+\frac{\lambda}{h} = \left(\frac{4}{\lambda}+\frac{4}{\log n}+\frac{4}{\log^2n}\right)\log n \le \left(\frac{4}{\lambda}+\frac{4}{\log n_0}+\frac{4}{\log^2n_0}\right)\log n = R_1\log n, \label{log.4}
    \end{equation}
    and
    \begin{equation}
        S=1+\frac{h}{\lambda}\le \frac{\log n}{\log n_0}+\frac{\log n }{\lambda}=\left(\frac{1}{\log n_0}+\frac{1}{\lambda}\right)\log n = S_1\log n.  \label{log.5}
    \end{equation}
   We now estimate the right-hand side of \eqref{log.3}. From \eqref{log.4}, we have
    \begin{equation}
        \frac{a_1a_2}{9\lambda}R^2\le \frac{a_1A_2R_1^2}{9\lambda}\max\{\log b,1\}\log^2n = B_1\max\{\log b,1\}\log^2 n,  \label{log.6}
    \end{equation}
    and
    \begin{align}
        \frac{2}{3}(a_1+a_2)R &\le  \frac{2}{3}(a_1+A_2\max\{\log b,1\})R_1\log n \nonumber \\
        &=\frac{2}{3}\left(\frac{a_1R_1}{\log n}+\frac{A_2R_1\max\{\log b,1\}}{\log n}\right)\log^2n \nonumber \\
        &\le \frac{2}{3}\left(\frac{a_1R_1}{\log n_0}+\frac{A_2R_1}{\log n_0}\right)\max\{\log b,1\}\log^2n \nonumber \\
        &= B_2\max\{\log b,1\}\log^2n.  \label{log.7}
    \end{align}
      From \eqref{log.5}, we have
    \begin{align}
        \frac{16}{3}\sqrt{2a_1a_2}S^{3/2} &\le \frac{16}{3}\sqrt{2a_1A_2\max\{\log b,1\}}(S_1\log n)^{3/2}  \nonumber\\
        &= \frac{16}{3}\sqrt{2a_1A_2}\frac{S_1^{3/2}}{\sqrt{\log n}}\max\{\log b,1\}\log^2n \nonumber \\
        &\le \frac{16}{3}\sqrt{2a_1A_2}\frac{S_1^{3/2}}{\sqrt{\log n_0}}\max\{\log b,1\}\log^2n  \nonumber\\
        &= B_3\max\{\log b,1\}\log^2n.  \label{log.8}
    \end{align}
    Note that if $x\ge x_0$, then $\frac{\log x_0}{x_0}x\ge\log x$. In particular, since $n\ge n_0$, $\max\{\log b,1\}\log^2n\ge\max\{\log b,1\}\log^2n_0=A_3$. So
    in view of \eqref{log.5}, we have
    \begin{align}
        & \log\left(\frac{a_1a_2}{\lambda}S^2\right) + \frac{3}{2}\lambda + 2h+\frac{3}{20} \nonumber  \\
        &\le  \log\left(\frac{a_1A_2S_1^2}{\lambda}\max\{\log b,1\}\log^2n\right) +\frac{3}{2}\lambda+2\log n+\frac{3}{20} \nonumber \\
        &= \log \left(\frac{a_1A_2S_1^2}{\lambda}\right)+\log\left(\max\{\log b,1\}\log^2n\right)+\frac{3}{2}\lambda+2\log n+\frac{3}{20} \nonumber \\
        &\le  \log\left(\frac{a_1A_2S_1^2}{\lambda}\right)+\frac{\log(\max\{\log b,1\}\log^2n_0)}{\max\{\log b,1\}\log^2n_0}\max\{\log b,1\}\log^2n +\frac{3}{2}\lambda+2\log n+\frac{3}{20}\nonumber  \\
        &\le \log\left(\frac{a_1A_2S_1^2}{\lambda}\right) + \frac{\log A_3}{A_3}\max\{\log b,1\}\log^2n +\frac{3}{2}\lambda+2\log n+\frac{3}{20} \nonumber \\
        &\le \left(\left(\log\left(\frac{a_1A_2S_1^2}{\lambda}\right)+\frac{3}{2}\lambda+\frac{3}{20}\right)\frac{1}{A_3}+\frac{\log A_3}{A_3}+\frac{2}{\log n_0}\right)\max\{\log b,1\}\log^2n\nonumber  \\
        &=  B_4\max\{\log b,1\}\log^2n.  \label{log.9}
    \end{align}
    Therefore, from \eqref{log.6} - \eqref{log.9} and \eqref{log.1}, \eqref{log.3} implies
    \begin{equation*}
        \log\left(\frac{2}{B|b|^n}\right) \ge \log|\log\alpha_1-n\log\alpha_2|>-(B_1+B_2+B_3+B_4)\max\{\log b,1\}\log^2n 
        \end{equation*}
        and this gives
            \begin{equation*}
       \frac{\log\left(\frac{2}{B}\right)}{\max\{\log b,1\}\log^2n}+B_1+B_2+B_3+B_4 \ge \frac{n\log b}{\max\{\log b,1\}\log^2n},
        \end{equation*}
 Since $n\ge n_0$,
    \begin{align*}
        C=\frac{\log\left(\frac{2}{B}\right)}{A_3}+B_1+B_2+B_3+B_4 &\ge \frac{\log\left(\frac{2}{B}\right)}{\max\{\log b,1\}\log^2n}+B_1+B_2+B_3+B_4 \\
        &\ge \frac{n\log b}{\max\{\log b,1\}\log^2n} \\
        &\ge \frac{n\log 2}{\log^2n}\ge \frac{n_1\log2}{\log^2n_1}>C,
    \end{align*}
    which is a contradiction. Thus if $n\ge\max\{n_0,n_1,n_2\}$, there are no such $a,b$ can exist.
\end{proof}

Theorem \ref{log-bound-general} requires that $b\ge2$. To handle the possibility of $b=1$, we make the following observation.

\begin{lemma}
    \label{b-equals-1}
    Let $A, B=\frac{p_2}{q_2}\in\Q$ where $p_2,q_2\in\Z$ such that $\gcd(p_2,q_2)=1$. Suppose there exists $a,b\in\N$ such that
    \begin{equation*}
        Aa^p - Bb^p = 1
    \end{equation*}
    such that $q_2\mid b^p$. If $|b|=1$ then $q_2=1$ and either
    \begin{enumerate}
        \item $\frac{1+B}{A}$ is a $p$-th power of an integer, or
        \item $\frac{1-B}{A}$ is a $p$-th power of an integer.
    \end{enumerate}
\end{lemma}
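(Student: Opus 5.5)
The argument is a direct substitution; no earlier result is needed beyond the hypotheses. Since $b$ is an integer with $|b|=1$, we have $b^p=\pm1$. The hypothesis $q_2\mid b^p$ then forces $q_2\mid 1$, so $q_2=1$ (taking $q_2>0$ as the normalized denominator). Hence $B=p_2\in\Z$.

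Next I would substitute $b^p=\pm1$ into $Aa^p-Bb^p=1$, which gives $Aa^p=1\pm B$.

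\textbf{Case $b^p=1$.} Here $Aa^p=1+B$. Note that $A\neq 0$, since otherwise $-Bb^p=1$ and the equation degenerates; for nonzero $A$, as in the setting of Lemma \ref{p-values}, we may divide to get $a^p=\frac{1+B}{A}$. Because $a\in\N$, this says $\frac{1+B}{A}$ is the $p$-th power of an integer, which is alternative (1).

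\textbf{Case $b^p=-1$.} This is possible only when $b=-1$ and $p$ is odd. It gives $Aa^p=1-B$, so $a^p=\frac{1-B}{A}$ is a $p$-th power of an integer, which is alternative (2). If $b$ is literally in $\N$, then $b=1$ and only alternative (1) arises; the statement's use of $|b|$ indicates that negative $b$ is also allowed, in line with Remark \ref{descent-symmetry}. The disjunction covers both situations.

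The only delicate point is the interpretation of the hypotheses: the nonvanishing of $A$ and the sign convention for $q_2$. I would state both explicitly at the start of the proof. Everything else is immediate.
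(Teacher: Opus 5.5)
Your proposal is correct and matches the paper's proof: $q_2\mid b^p=\pm1$ forces $q_2=1$, and substituting $b^p=\pm1$ and dividing by $A$ gives $a^p=\frac{1\pm B}{A}$. Your explicit remarks on $A\neq 0$ and on the case $b=-1$ with $p$ odd make precise what the paper leaves implicit in ``a similar argument works for $b=-1$.''
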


\begin{proof}
    Since $q_2\mid b^p=\pm1$, and $q_2\in\N$, we have $q_2=1$. If $b=1$, then
  $Aa^p - Bb^p = 1$ and hence $a^p = \frac{1+Bb^p}{A}=\frac{1+B}{A}\in\Z$.
 So $\frac{1+B}{A}$ is a $p$-th power. A similar argument works for $b=-1$.
\end{proof}

\begin{remark}
    To rule out $b=\pm1$ from being a solution to $Aa^p-Bb^p=1$, by Lemma \ref{b-equals-1} we can check whether $\frac{1\pm B}{A}$ is an integer and a $p$-th power. If not, then $b\neq\pm1$.
\end{remark}

\begin{remark}
    We have that $Ax^p-By^p=1$ has a solution if and only if $By^p - Ax^p =-1$ has a solution. Since the roles of $A$ and $B$ are different in Theorem \ref{log-bound-general}, it is possible that the bound that we get is different depending on which form of the equation we use. Thus, when calculating the bound on $n$, we can take the smaller of the two bounds when swapping the roles of $A$ and $B$, provided that we also have that $q_1\mid a^n$.

    Further, since Proposition \ref{descent} gives a system of three equations, we can apply Theorem \ref{log-bound-general} to each of the three equations and take the smallest of the bounds.
\end{remark}

\begin{lemma}
    \label{zero-sol}
    Suppose $(x,y,p)$ is a non-trivial solution to \eqref{main} and write 
    \[
        x = \alpha_0D_0\frac{u_0^n}{d_0}, \quad x+1 = \alpha_1D_1\frac{u_1^n}{d_1}, \quad 2x+1=\alpha_2D_2\frac{u_2^n}{d_2},
    \]
    where $\alpha_i, D_i,d_i\in\N$ and $u_i\in\Z$ are as given in Proposition \ref{descent} and Remark \ref{descent-symmetry}. Then $u_0u_1u_2\neq0$.
\end{lemma}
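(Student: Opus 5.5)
The statement asks us to show that no $u_i$ vanishes. Since $\alpha_i,D_i,d_i$ are positive integers, the factor $\alpha_iD_i/d_i$ is never zero. So the argument reduces to showing that the three linear factors $x$, $x+1$ and $2x+1$ are all nonzero.

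First I treat the direct descent, where $u_0>0$ and $x\in\N$ satisfies \eqref{main} nontrivially. Here $x\ge 1$, so $x$, $x+1$ and $2x+1$ are all positive. The identities from Proposition~\ref{descent},
\[
    x=\alpha_0D_0\frac{u_0^n}{d_0},\quad x+1=\alpha_1D_1\frac{u_1^n}{d_1},\quad 2x+1=\alpha_2D_2\frac{u_2^n}{d_2},
\]
then force $u_0^n$, $u_1^n$ and $u_2^n$ to be nonzero, hence $u_0u_1u_2\neq0$.

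Next I treat the symmetric case of Remark~\ref{descent-symmetry}, where the system (b) solution is $(-u_1,-u_0,-u_2)$. The roles of $x$ and $x+1$ are swapped: $x' = -(x+1)$ plays the role of $x$, and then $x'+1=-x$ and $2x'+1=-(2x+1)$. These are again the negatives of three positive integers, so none of them is zero. Hence the corresponding $u_i$ are nonzero as well. Equivalently, the remark's normalisation that $u_0,u_1,u_2$ all have the same sign is compatible only with all of them being nonzero, because each $u_i^n$ is a nonzero rational multiple of a nonzero integer.

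The only real care needed is this bookkeeping in the symmetric case: checking that under the sign change the image of $(x,x+1,2x+1)$ is $(-(x+1),-x,-(2x+1))$ up to reordering. I would record that the parity of $n$ does not matter, since we only use $u_i^n\neq0\iff u_i\neq0$. Otherwise the proof is immediate.
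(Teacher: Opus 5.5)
Your proof is correct. The paper states this lemma without any proof and simply uses it in Remark~\ref{log-bound-whole-system}; your argument is the evident justification it relies on: each $u_i^n$ equals $d_i/(\alpha_iD_i)$ times one of $x$, $x+1$, $2x+1$ (or, in the normalisation of Remark~\ref{descent-symmetry}, one of $-(x+1)$, $-x$, $-(2x+1)$), and all of these are nonzero because $x\ge 1$. Your reading of the symmetric case via $x'=-(x+1)$ is the right way to interpret that remark's relabelling, and you are correct that the parity of $n$ plays no role in the nonvanishing.
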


\begin{remark}
    \label{log-bound-whole-system}
    Let $(x,y,n)$ be a solution to \eqref{main}. By Proposition \ref{descent}, $\exists\alpha_i,d_i\in\N$ satisfying 
    \begin{align*}
        \alpha_1D_1\frac{u_1^n}{d_1}-\alpha_0D_0\frac{u_0^n}{d_0} &= 1\\
        \alpha_2D_2\frac{u_2^n}{d_2}-2\alpha_0D_0\frac{u_0^n}{d_0} &= 1 \\
        2\alpha_1D_1\frac{u_1^n}{d_1} - \alpha_2D_2\frac{u_2^n}{d_2} &= 1.
    \end{align*}
    Suppose that one of 
    \[
        \frac{1\pm\frac{\alpha_0D_0}{d_0}}{\frac{\alpha_1D_1}{d_1}}, \quad\frac{1\pm\frac{2\alpha_0D_0}{d_0}}{\frac{\alpha_2D_2}{d_2}},\quad \text{or} \quad \frac{1\pm\frac{\alpha_2D_2}{d_2}}{\frac{2\alpha_1D_1}{d_1}}
    \]
    are not $n$-th power integers. Then by Theorem \ref{log-bound-general}, Lemma \ref{b-equals-1}, and Lemma \ref{zero-sol}, we can achieve a bound on $n$ for the system \eqref{14}-\eqref{16}. 
\end{remark}

\section{The even cases $k\le 42$ and $n$ prime}

Since Jacobson, Pint\'er, and Walsh \cite{walsh} have already verified Sch\"affer's conjecture when $k\le 58$ is even and $p=2$, we may assume that $p\ge3$ is an odd prime.

By Proposition \ref{descent}, for a fixed even $k$, if we can show that for all possible combinations of $\alpha_i,D_i,d_i$, $0\le i\le 3$ have no solutions, then there are no solutions to \eqref{main}. Fix $2\le k\le42$ even and $\alpha_i,D_i,d_i$. If this case satisfies Corollary \ref{DM-Corollary}, then we move on. Otherwise, we attempt to apply Corollary \ref{system-p-values} and the following remark to the system with using the set of auxiliary primes 
\[ Q=\{p\text{ prime} : p< 1000\}. \] 
The modular method, if it succeeds, produces a finite list of possible exceptional exponents $p$.
Next, we apply Theorem \ref{log-bound-general}, Lemma \ref{b-equals-1}, and the following remarks to achieve an upper bound on $p$. To any remaining primes $p$, we apply Lemma \ref{local-kraus}, Remark \ref{p-squared}, and Lemma \ref{local-kraus-with-Tk}. If a system escapes these methods, then we appeal to {\tt Magma}'s Thue equation solver to show that there are no solutions to the system. The only exponent where this is needed is $p=3$.

The only possible systems that escape the outlined procedure are those of the form
\begin{align*}
    Au_1^p - (A+1)u_0^p &= 1 \\
    (2A+1)u_2^p-2(A+1)u_0^p &= 1 \\
    2Au_1^p - (2A+1)u_2^p &= 1
\end{align*}

Theorem \ref{bennett} gives us that the only solution is $(u_0,u_1,u_2)=(-1,-1,-1)$. We then verify that such a solution does not satisfy equation \eqref{main}. For example, one such case is $k=4$, $D_0=D_1=D_2=1$, $\alpha_0=3$, $\alpha_1=2$, $\alpha_3=5$. Since $u_0<0$, we get that $x=-\alpha_1u_0^p=2$. However, $S_4(2)=1+2^4=17$, which is not a $n$th power for any $n\ge2$. 

Applying the same strategy for $2\le k\le42$ even, we have show that there are no solutions to the system \eqref{14}--\eqref{17}, and thus by Proposition \ref{descent} there are no solutions to $S_k(x)=y^n$ for $2\le k\le42$ even.

\section{Conclusions}

We have proved the cases $2 \le k \le 42$ even of Sch\"affer's conjecture by systematically applying a more detailed descent, the modular method, linear forms in logarithms, local methods, or Thue equation solvers. Theorem~\ref{bennett} and Theorem~\ref{DM} are used to treat equations which have evident solutions.

The following conjecture of Kraus explains why local methods often succeed in giving a resolution of Sch\"affer's conjecture.

\begin{conj}
  Let $a,b,c$ be non-zero pairwise coprime integers which do not satisfy a linear relation with coefficients in $\left\{ -1, 0, 1 \right\}$. There is a constant $g(a,b,c)$ such that for all primes $p > g(a,b,c)$, the equation
  \begin{equation}
  \label{abc-fermat}
      a x^p + b y^p + c z^p = 0
  \end{equation}
  has a local obstruction.
\end{conj}

For \eqref{abc-fermat} to have a local obstruction at a prime $q \nmid p abc$, it is necessary that $q \equiv 1 \pmod p$ and $q < \left( (p-1)(p-2) \right)^2$.

Our use of local constraints on a coupled system of ternary equations suggests that the constant $g(a,b,c)$ is quite small and thus on one hand local methods succeed to reach the linear forms in logarithm bounds on the exponent $p$ and on the other hand the small exponents that escape can be treated using Thue equation solvers. Applying the modular method and linear forms in logarithms to a coupled system of ternary equations also makes these methods more efficient, for instance usually one of the equations has a better bound from linear forms in logarithms.

\section{Appendix}

The {\tt Magma} program used can be found in the author's github: \url{https://github.com/JamesHoulee/Schaeffer}.

\bibliography{paper}{}
\bibliographystyle{alpha}

\end{document}